\DeclareFontFamily{OT1}{pzc}{}
\DeclareFontShape{OT1}{pzc}{m}{it}{<-> s * [1.100] pzcmi7t}{}
\DeclareMathAlphabet{\mathpzc}{OT1}{pzc}{m}{it}
    \patchcmd{\section}{\scshape}{\large\bfseries}{}{}
    \renewcommand{\@secnumfont}{\bfseries}
\definecolor{color1}{rgb}{0.9,0.9,0.9}
\definecolor{color2}{rgb}{0.5,0.5,0.5}
\numberwithin{equation}{section}
\newtheorem{theorem}{Theorem}[section]
\newtheorem{corollary}[theorem]{Corollary}
\newtheorem{lemma}[theorem]{Lemma}
\newtheorem{proposition}[theorem]{Proposition}
\theoremstyle{definition}
\newtheorem{remark}[theorem]{Remark}
\newtheorem{example}[theorem]{Example}
\def\CC{\mathcal{C}}
\def\KK{\mathbb{K}}
\def\ZZ{\mathbb{Z}}
\def\VV{\mathcal{V}}
\def\lim{\mathsf{lim}}
\def\Pres{\mathsf{Pres}}
\def\epi{\twoheadrightarrow}
\def\mono{\rightarrowtail}
\def\Ker{\mathsf{Ker}}
\def\Im{\mathsf{Im}}
\def\CC{\mathsf{C}}
\def\NN{\mathsf{N}}
\def\DD{\mathsf{D}}
\def\DK{\mathsf{DK}}
\let\oldtocsection=\tocsection 
\let\oldtocsubsection=\tocsubsection 
\renewcommand{\tocsection}[2]{\hspace{0mm}\oldtocsection{#1}{#2}}
\renewcommand{\tocsubsection}[2]{\hspace{1em}\oldtocsubsection{#1}{#2}}
\begin{document}

\title{Limits via relations}

\begin{abstract}
In this paper, we study operations on functors in the category of abelian groups simplar to the derivation in the sense of Dold-Puppe. They are defined as derived limits of a functor applied to the relation subgroup over a category of free presentations of the group. The integral homology of the Eilenberg-Maclane space $K(\mathbb Z,3)$ appears as a part of description of these operations applied to symmetric powers.
\end{abstract}

\author{Sergei O. Ivanov} 
\email{ivanov.s.o.1986@gmail.com, ivanov.s.o.1986@bimsa.cn}
\address{
Beijing Yanqi Lake Institute of Applied Mathematics (BIMSA)}

\author{Roman Mikhailov} 
\address{Saint Petersburg University, 7/9 Universitetskaya nab., St. Petersburg, 199034 Russia}

\author{Fedor Pavutnitskiy}
\address{
Beijing Yanqi Lake Institute of Applied Mathematics (BIMSA)}

\thanks{The work of the second named author was performed at the Saint Petersburg Leonhard Euler International Mathematical Institute and supported by the Ministry of Science and Higher Education of the Russian Federation (agreement no. 075–15–2022–287).}

\maketitle

\section{Introduction}

Theory of limits and colimits in the category of presentations is studied in details in the series of papers. Speaking informally, theory of (derived) (co)limits is a way to design functors and natural transformations in algebraic categories with enough projective objects. 

In this paper, we introduce an operation on functors in the category of abelian groups, which looks like the derivation in the sense of Dold-Puppe. In particular, the short exact sequences of functors give rise to a long exact sequence of these new ``derived'' ones. 

For an abelian group $A,$ consider the category of free presentations ${\sf Pres}(A),$ with object being free groups $F$ with epimorphisms $F\to A.$ Morphisms are group homomorphisms over $A$. For any functor from the  category of presentations  $\mathcal F: {\sf Pres}(A)\to {\sf Ab}$
to the category of abelian groups, 
one can consider the derived limits ${\sf lim}^i\mathcal F,\ i\geq 0.$ These limits depend only on $A.$ Moreover, if we denote by ${\sf Pres}_{\sf Ab}$ the category, whose objects are epimorphisms from free abelian groups to abelian groups $F\epi A,$  and morphisms $(F\twoheadrightarrow A)\to (F'\twoheadrightarrow A')$ are commutative squares, then any functor $\mathcal F:{\sf Pres}\to {\sf Ab}$ defines a functor ${\sf Ab}\to {\sf Ab}$ 
given by $A\mapsto   \lim_{{\sf Pres}(A)} \mathcal F.$ 

Let $F$ be an endofunctor in the category of abelian groups. Consider an object of ${\sf Pres}(A),$  $0\to R\to F\to A\to 0,$  i.e. denote by $R$ the kernel of an epimorphism $F\to A.$ Our operation is the following:
$$
F\mapsto {\sf lim}^i FR_A:={\sf lim}^i F(R),\ i\geq 0. 
$$

Let's show how it works on the main examples. Denote by $\otimes^n, {\sf S}^n, \Lambda^n, \Gamma^n,\ n\geq 0,$
the tensor, symmetric, exterior and divided powers respectively. There are the following natural isomorphisms
\begin{align*}
& {\sf lim}^i \otimes^nR_A=L_{n-i}\otimes^n(A),\\ 
& {\sf lim}^i \Lambda^nR_A=L_{n-i}{\sf S}^n(A),\\ 
& {\sf lim}^i \Gamma^nR_A=L_{n-i}\Lambda^n(A),
\end{align*}
for $i=0,\dots, n.$ Here $L_i$ are derived functors in the sense of Dold-Puppe. The proof follows from the Koszul-type sequences and properties of limits, see \cite[Th. 8.1]{ivanov2015higher}.

The case of functors ${\sf lim}^i{\sf S}^nR_A,$
is more complicated and is the main subject of this paper. We show that, for a free abelian group $A,$ there is the following description 
\begin{align*}
& \lim^2 {\sf S}^2R_A=\tilde\otimes ^2(A)\\
& \lim^3 {\sf S}^3R_A=\tilde \otimes ^3(A)\\ 
& \lim^2 {\sf S}^3R_A= A\otimes \ZZ/3.
\end{align*}
with all other limits to be zero for ${\sf S}^2, {\sf S}^3.$ Here $\tilde\otimes^2, \tilde\otimes^3$ are anty-symmetric tensor square and cube respectively. 

The structure of functors ${\sf lim}^i{\sf S}^n(R)$ is complicated. We show that, for $A=\ZZ,$ there is an isomorphism
$$
H_n( K(\ZZ,3),\ZZ) \cong \bigoplus_{d\geq 2} \lim^{n-2d+1} S^dR_{\mathbb Z}.
$$
Here $H_n( K(\ZZ,3),\ZZ) $ is the $n$th integral homology group of the Eilenberg-MacLane space $K(\ZZ,3).$ This description follows from the following statement (see Corollary \ref{cor:non-natural_iso}): for a free finitely generated abelian group $A$ there are \textbf{non-natural} isomorphisms of abelian groups
$$
\lim^i {\sf S}^dR_A 
\cong
\begin{cases}
L_{i-1}\Gamma^d(A,1), & i<d, \\
L_{d-1}\Gamma^d(A,1) \oplus L_d \Gamma^d(A,1) , & i=d,\\
0,& i>d.
\end{cases}
$$
Here $L_i\Gamma^d(A,1)$ are derived functors in the sense of Dold-Puppe. The situation is interesting and rare. Usually when we have two complicated graded functors with the property that they are non-naturally isomorphic, there seems to be an either hidden natural isomorphism or a problem of non-splitting sequences raises in some way. However in this case, there is no natural isomorphism. The functorial description of the derived functors $L_i\Gamma^n(A,1)$ for free abelian groups $A$ and well as of the integral homology groups $H_i(K(A,3),\mathbb Z)$ is given in the paper \cite{breen2016derived}. All functors which appear in that description are known and there are no anty-symmetric powers.

The paper is organized as follows. In Section 2 we recall needed facts on Dold-Kan correspondence and cosimplicial groups. In Section 3, We show that, for a polynomial functor $\Phi$ of degree $d\geq 1,$ ${\sf lim}^i \Phi R_A=0$ for $i>d.$ Observe that the derived functors in the sense of Dold-Puppe have the same property. The key point of the proof is based on cosimplicial models constructed for arbitrary functors which give a way to compute derived limits. In Section 4 we recall Kuhn duality for functors and apply it in the context of limits. In particlular, we show how to describe the limits via shifted devided functors of the dual functors (see Corollary \ref{corintro}). In Section 5 we prove the mentioned above results on ${\sf lim}^i {\sf S}R_A.$

\section{Cosimplicial modules}

\subsection{Reminder on Dold–Kan correspondence}
In this subsection we remind some information of the Dold-Kan correspondence that can be found in \cite{weibel1995introduction}.

Let $\VV$ be an abelian category. For a simplicial object $V$ of an abelian category $\VV$ the non-normalised complex $\CC_\bullet V$ is a chain complex whose components are $\CC_n V=V_n$ and the differential is defined by the alternating sum of face maps. The normalised complex $\NN_\bullet V$ can be defined in two ways which are equivalent up to natural isomorphism: as a subcomplex of $\CC_\bullet V $ and as a quotient complex of $\CC_\bullet V$  \cite[Lemma 8.3.8]{weibel1995introduction}. For our purposes it is more convenient to define it as a quotient complex, whose components are 
\begin{equation}
\NN_n V = {\sf Coker}((s_0,\dots,s_{n-1}) : V_{n-1}^{\oplus n} \to  V_n).  
\end{equation}
The map $\CC_\bullet V\epi \NN_\bullet V$ is a chain homotopy equivalence \cite[Th.8.3.8]{weibel1995introduction}. If we denote by $\DD V$ the kernel of the map $\CC V\to \NN V,$ then there is a natural splitting of the short exact sequence $\DD V \mono \CC V \epi \NN V$
\begin{equation}
\CC V \cong \NN V \oplus \DD V,
\end{equation}
where $\DD V$ is a chain contractible complex. The homology of $\NN V$ and $\CC V$ is called homotopy groups of the simplicial object
\begin{equation}
\pi_* V = H_*(\NN_\bullet V)\cong H_*(\CC_\bullet V). 
\end{equation}

The construction of the normalised complex defines a equivalence between the category of simplicial objects and the category of non-negatively graded chain complexes, whose inverse functor is denoted by $\DK$
\begin{equation}
\NN_\bullet : \VV^{\Delta^{\sf op}} \leftrightarrows {\sf Ch}_{\geq 0}(\VV) : {\sf DK}_\bullet. 
\end{equation}

The functor $\DK_\bullet$ can be constructed as follows. For  a chain complex $U$ we define a simplicial object $\DK_\bullet U,$ whose components are
\begin{equation}
\DK_n U = \bigoplus_{\sigma:[n]\epi [k]} U_k,
\end{equation}
where the summation is taken by all surjective order-preserving maps $\sigma:[n]\epi [k],$ where $0\leq k\leq n$. If $f:[m]\to [n]$ is an order preserving map, then the map $f^*:\DK_n U\to \DK_m U$ is defined so that its component $(f^*)_{\tau,\sigma} : U_k \to U_l$ from the direct summand indexed by $\sigma:[n]\epi [k]$ to the direct summand indexed by $\tau:[m]\epi [l]$ has the following form
\begin{equation}
(f^*)_{\tau,\sigma} = 
\begin{cases} 
{\sf id}_{U_k},& \sigma f =\tau,  \\ 
\partial^U_k, & \sigma f = d^k \tau, \\ 
0, & \text{else.}
\end{cases}    
\end{equation}
Note that the construction of $\DK$ commutes with additive functors i.e. for any additive functor between abelian categories $\Phi:\VV\to \VV'$ there is an natural isomorphism 
\begin{equation}\label{eq:additive_DK}
\DK_\bullet (\Phi(U))\cong \Phi( \DK_\bullet(U)). 
\end{equation}

\subsection{Dold-Kan correspondence for cosimplicial modules}

Since the dual of an abelian category is also abelian, there is a dual version of this picture \cite[Cor./Def. 8.4.3]{weibel1995introduction}. The non-normalised cochain complex $\CC^\bullet V$ of a cosimplicial object $V$ is a cochain complex, whose components are $\CC^n V=V^n,$ and the differential is defined as the alternating sum of coface maps. The normalised cochain complex $\NN^\bullet V$ is a subcomplex of $\CC^\bullet V$ whose components are defined as 
\begin{equation}
\label{eq:ker_si}
\NN^n V = {\sf Ker}((s^0,\dots,s^{n-1})^T: V^n \to (V^{n-1})^{\oplus n+1} ).
\end{equation}
The monomorphiam $\NN^\bullet V \to \CC^\bullet V$ is a quasiisomorphism and cohomotopy groups of $V$ are defined as
\begin{equation}
\pi^*V=H^*(\NN^\bullet V)\cong H^*( \CC^\bullet V).
\end{equation}
The Dold-Kan correspondence is an equivalence between the category of cosimplicial objects and the category of non-negatively graded cochain complexes 
\begin{equation}
\NN^\bullet : \VV^\Delta \leftrightarrows {\sf CoCh}^{\geq 0}(\VV) : {\sf DK}^\bullet. 
\end{equation}

\begin{proposition}
Let $V$ be a cosimplicial module over a ring $\KK.$ Then the components of the normalized cochain complex $\NN V$ are intersections of kernels of codegeneracy maps 
\begin{equation}
\NN^n V = \bigcap_{i=0}^n {\sf Ker}(s^i:V^n \to V^{n-1}),
\end{equation}
and the differential on $\NN V$ is given by the restriction of the alternating sum of cofaces $\sum (-1)^id^i.$ 
\end{proposition}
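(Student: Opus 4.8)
The plan is to first dispatch the description of the components and then concentrate on the differential, which carries the only real content. Since $V$ is a cosimplicial $\KK$-module we may argue with elements: the kernel of the map $(s^0,\dots,s^{n-1})^T$ of \eqref{eq:ker_si}, whose components are the codegeneracies $s^i$, is tautologically the set of $x\in V^n$ annihilated by every $s^i$, that is $\bigcap_i\Ker(s^i)$. So the identification of $\NN^n V$ with the intersection of the kernels of the codegeneracies is immediate from the definition.

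The substance is to show that the alternating coface sum $\delta=\sum_i(-1)^i d^i$ on $\CC^\bullet V$ restricts to $\NN^\bullet V$, i.e. that $\delta$ carries $\NN^n V$ into $\NN^{n+1}V$; once this is known, the differential of the subcomplex $\NN^\bullet V\subseteq \CC^\bullet V$ is by definition the restriction of $\delta$, which is exactly the second assertion. Concretely I would take $x\in V^n$ with $s^ix=0$ for $0\le i\le n-1$ and verify $s^k(\delta x)=0$ for every codegeneracy $s^k:V^{n+1}\to V^n$ with $0\le k\le n$. Expanding, $s^k\delta x=\sum_{i=0}^{n+1}(-1)^i s^kd^i x$, and I would substitute the cosimplicial identities
\begin{equation*}
s^kd^i=\begin{cases} d^i s^{k-1}, & i<k,\\ {\sf id}, & i=k \text{ or } i=k+1,\\ d^{i-1}s^k, & i>k+1.\end{cases}
\end{equation*}
The terms with $i<k$ vanish because they equal $d^i s^{k-1}x$ and $s^{k-1}x=0$, while the terms with $i>k+1$ vanish because they equal $d^{i-1}s^kx$ and $s^kx=0$; the two surviving terms $i=k,k+1$ both equal $x$ and enter with opposite signs $(-1)^k$ and $(-1)^{k+1}$, so they cancel. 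Hence $s^k\delta x=0$, and $\delta x\in\NN^{n+1}V$.

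The only point needing care — and what I would flag as the main, if modest, obstacle — is the bookkeeping at the extreme indices. For $k=0$ there are no terms with $i<k$, and for $k=n$ there are no terms with $i>k+1$, so in the latter case the argument never refers to a nonexistent codegeneracy $s^n$ on $V^n$ and uses only $s^{n-1}x=0$; one checks that the two identity terms still survive and cancel in both boundary cases. This confirms simultaneously that $\NN^\bullet V$ is a genuine subcomplex and that its differential is the restriction of $\sum(-1)^i d^i$. Conceptually this is dual to the standard fact that the alternating face differential descends to the normalized chain complex; indeed an alternative route is to regard $V$ as a simplicial object of the opposite abelian category and invoke \cite[Lemma 8.3.8]{weibel1995introduction}, under which $\bigcap_i\Ker(s^i)$ corresponds to the quotient normalization and the induced differential corresponds to the restriction of $\delta$. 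I would nonetheless present the direct element computation as the main argument, since the statement concerns modules and the calculation is short and self-contained.
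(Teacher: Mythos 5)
Your proof is correct, and it takes a genuinely more self-contained route than the paper. The paper disposes of this proposition in one line: it observes that the component formula is immediate from \eqref{eq:ker_si}, and for the differential it simply invokes the fact that $\NN^\bullet V$ is a subcomplex of $\CC^\bullet V$, a fact imported wholesale from the dual Dold--Kan correspondence (Weibel, Cor./Def.~8.4.3). What you do instead is \emph{prove} that fact: your element-level computation with the cosimplicial identities
$s^k d^i = d^i s^{k-1}$ for $i<k$, $s^k d^i = \mathrm{id}$ for $i=k,k+1$, and $s^k d^i = d^{i-1} s^k$ for $i>k+1$
correctly shows that $\delta=\sum(-1)^i d^i$ maps $\bigcap_i \Ker(s^i)$ into $\bigcap_k \Ker(s^k)$, with the two identity terms cancelling in sign; your treatment of the boundary cases $k=0$ and $k=n$ is also right, since in each case the vanishing terms only ever invoke codegeneracies that actually exist on $V^n$. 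The alternative you sketch at the end --- dualizing Weibel's Lemma 8.3.8 by viewing $V$ as a simplicial object of the opposite category --- is essentially the paper's argument. The trade-off is the usual one: the paper's citation is shorter and keeps the burden on a standard reference, while your direct verification makes the statement independent of the duality machinery at the cost of index bookkeeping; since the proposition concerns modules, where element computations are available, your version is a legitimate and arguably more transparent proof.
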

\begin{proof}
It follows from the fact that $\NN^\bullet V$ is a subcomplex of $\CC^\bullet V$ and the formula \eqref{eq:ker_si}.
\end{proof}

\begin{proposition}\label{prop:DK(f)}
If we treat a $\KK$-homomorphism $f:U^0\to U^1$ as a cochain complex concentrated in degrees $0$ and $1,$ we obtain that the cosimplicial module $\DK^\bullet(f)$ has components
\begin{equation}
\DK^n(f) = (U^1)^{\oplus n}\oplus U^0.
\end{equation}
Its coface and codegeneracy maps are defined by
\begin{equation}
\begin{split}
d^i_{\DK^\bullet(f)}(x_0,\dots, x_{n-2}, y) &= \begin{cases} 
(0,x_0,\dots,x_{n-2},y), & i=0 
\\
(x_0,\dots, x_{i-1},x_{i-1},\dots,x_{n-2}, y), & 1 \leq i \leq n-1 
\\ 
(x_0,\dots, x_{n-1}, f(y), y), & i = n\end{cases} \\  
s^i_{\DK^\bullet(f)} (x_0,\dots, x_{n}, y) &= (x_0,\dots, \hat x_i,\dots,x_{n}, y).
\end{split}
\end{equation}
\end{proposition}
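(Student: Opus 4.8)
The plan is to obtain $\DK^\bullet$ by dualizing the explicit description of the simplicial functor $\DK_\bullet$ recalled above, and then to specialize to the two-term complex. Since a cosimplicial object of $\VV$ is a simplicial object of $\VV^{\sf op}$ and a cochain complex of $\VV$ is a chain complex of $\VV^{\sf op}$, the functor $\DK^\bullet$ is nothing but $\DK_\bullet$ computed in $\VV^{\sf op}$. Transporting the formulas for the components and the structure maps of $\DK_\bullet$ through this opposite yields
\[
\DK^n U = \bigoplus_{\sigma:[n]\epi[k]} U^k,
\]
where the sum is over order-preserving surjections with $0\le k\le n$ (a finite sum, so coproducts in $\VV^{\sf op}$ are genuine direct sums in $\VV$), and, for $g:[m]\to[n]$, the component of $g_*\colon\DK^m U\to\DK^n U$ from the summand $\tau:[m]\epi[l]$ to the summand $\sigma:[n]\epi[k]$ is
\[
(g_*)_{\sigma,\tau}=
\begin{cases}
{\sf id}_{U^k}, & \sigma g=\tau,\\
\partial^U, & \sigma g=d^{\,l+1}\tau,\\
0, & \text{else,}
\end{cases}
\]
with $d^{\,l+1}\colon[l]\to[l+1]$ the top coface. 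This is the exact dual of the trichotomy defining $\DK_\bullet$, the differential $\partial^U$ now raising cohomological degree from $U^l$ to $U^{l+1}$.

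Next I would specialize. For a complex concentrated in degrees $0,1$ with differential $f\colon U^0\to U^1$, only the summands with $k\in\{0,1\}$ survive. There is a unique surjection $[n]\epi[0]$, contributing one copy of $U^0$, and exactly $n$ surjections $[n]\epi[1]$, each determined by its threshold $t\in\{1,\dots,n\}$ (the value at which it jumps from $0$ to $1$); write $\sigma_t$ for this surjection. This gives the stated identification $\DK^n(f)=(U^1)^{\oplus n}\oplus U^0$, and I fix coordinates $(x_0,\dots,x_{n-1},y)$ so that $x_{t-1}$ labels the summand $\sigma_t$ and $y$ labels the $U^0$ summand. The maps $d^i$ and $s^i$ are then read off by taking $g$ to be the coface $\delta^i$, respectively the codegeneracy $\varsigma^i$, of $\Delta$, composing each threshold surjection with $\delta^i$ (resp. $\varsigma^i$), and recording which branch of the trichotomy occurs.

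It remains to carry out this composition. For the codegeneracies the composites $\sigma_t\varsigma^i$ stay surjections onto $[1]$, so only identity components arise and one recovers the ``delete $x_i$'' formula, with $\partial^U$ never firing. For the cofaces the composite $\sigma_t\delta^i$ is again a threshold surjection in the generic range $1\le i\le n-1$, which produces the doubling of a coordinate; the two boundary cases are exactly where $\sigma_t\delta^i$ degenerates to a constant map and fails to be a surjection onto $[1]$. At $i=0$ the offending composite is constant $1$, so neither the identity nor the differential branch applies and a zero is inserted, whereas at $i=n$ the offending composite is constant $0$, which equals $d^{1}\tau$, so the differential branch fires and deposits $f(y)$ in the last $U^1$-coordinate. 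Assembling these contributions yields the claimed formulas, after the routine re-indexing $n\mapsto n+1$ in the displayed codegeneracy. The main obstacle is precisely this boundary bookkeeping: one must verify that $\sigma_t\delta^i$ is a surjection onto $[1]$ in the interior and identify its threshold, and that at the two ends it degenerates in the two distinct ways that force the inserted zero on one side and the differential $f$ on the other.
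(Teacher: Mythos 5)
Your proposal is correct, but it takes a genuinely different route from the paper's proof. You derive the formulas for $\DK^\bullet(f)$ constructively: you dualize the explicit summand-indexed description $\DK_n U=\bigoplus_{\sigma:[n]\epi[k]}U_k$ that the paper recalls in the simplicial setting (noting correctly that the relevant coproducts in $\VV^{\sf op}$ are finite, hence genuine direct sums in $\VV$), specialize to a two-term complex so that only the unique surjection $[n]\epi[0]$ and the $n$ threshold surjections $[n]\epi[1]$ contribute, and then read off each coface and codegeneracy by composing threshold surjections with $\delta^i$, $\varsigma^i$ and tracking the identity/differential/zero trichotomy. Your boundary bookkeeping is the right one and it checks out: at $i=0$ the composite of the lowest threshold surjection with $\delta^0$ is constant $1$, which is neither a surjection onto $[1]$ nor of the form $d^{1}\tau$ (the top coface $d^1:[0]\to[1]$ has image $\{0\}$), so a zero is inserted; at $i=n$ the composite of the top threshold surjection with $\delta^n$ is constant $0=d^1\tau$, so the differential branch fires and deposits $f(y)$; and for codegeneracies all composites remain threshold surjections, giving the deletion formula. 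The paper argues in the opposite direction: it takes the displayed formulas as given, checks that they satisfy the cosimplicial identities, and computes that the normalized complex of the resulting cosimplicial module is $U^0\to U^1$; since $\NN^\bullet$ is an equivalence, any cosimplicial module whose normalization is $f$ must be naturally isomorphic to $\DK^\bullet(f)$, which finishes the paper's proof. Your derivation buys an explanation of where the formulas come from and extends mechanically to complexes of arbitrary length; the paper's verification is shorter to write down, but it presents the formulas unmotivated and rests on the uniqueness supplied by the Dold--Kan equivalence rather than on the explicit construction of the inverse functor.
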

\begin{proof}
It is easy to check that the described cosimplicial module is well defined and that its normalized complex is $U^0\to U^1.$ 
\end{proof}

\section{Cochain complex of crossed effects computing higher limits}

\subsection{Cross effects of functors} 
Let $\KK$ be a ring and  $A_1,\dots,A_n$ is a collection of $\KK$-modules. For $1\leq j\leq n$ we denote by 
\begin{equation}
{\sf pr}^j : \bigoplus_{i=1}^n A_i \longrightarrow \bigoplus_{i\neq j} A_i, \hspace{1cm} {\sf em}^j : \bigoplus_{i\neq j} A_i \longrightarrow \bigoplus_{i=1}^n A_i
\end{equation}
the canonical projection and the canonical embedding. 

Now assume that  $\Phi: {\sf Mod}(\KK) \to {\sf Mod}(\KK)$ is a functor to an abelian category such that $\Phi(0)=0.$  For $n\geq 0$ the $n$-th crossed effect of $\Phi$ is a functor ${\sf Mod}(\KK)^n\to \mathcal {\sf Mod}(\KK)$ given by 
\begin{equation}
\Phi(A_1|\dots|A_n) = \Ker\left( \Phi\left(\bigoplus_{i=1}^n A_i\right) \longrightarrow \bigoplus_{i=1}^n \Phi\left(\bigoplus_{j\ne i} A_j \right)\right),
\end{equation}
where the homomorphisms are induced by the canonical projections.
Note that if $A_i=0$ for some $i,$ then the crossed effect vanishes 
\begin{equation}\label{eq:zero-cross}
\Phi (A_1|\dots| 0| \dots| A_n)=0.
\end{equation}
The cross effect is a direct summand of $\Phi(\bigoplus_{i=1}^n A_i)$ and there is a decomposition (see \cite[p.2]{drozd2003poly}, \cite[p.1149]{drozd2003cubic}, {\cite[p.18]{djament2022decompositions}})
\begin{equation}
\Phi(\bigoplus_{i=1}^n A_i) = \bigoplus_{s=0}^n \ \bigoplus_{1\leq j_1<\dots<j_s\leq n} \Phi(A_{j_1}|\dots|A_{j_s}).    
\end{equation}
A functor $\Phi$ is called polynomial (in the sense of Eilenberg-Mac Lane) of degree $\leq d$ if $\Phi_{d+1}=0.$ 

\begin{lemma}\label{lemma:cross_pr} 
There are equations  
\begin{equation}\label{eq:cross_pr1}
\begin{split}
\Ker(\Phi({\sf pr}^j)) &= \bigoplus_{s=1}^n \ \bigoplus_{j\in \{j_1<\dots<j_s\}} \Phi(A_{j_1}|\dots|A_{j_s})\\
\Im( \Phi( {\sf em}^j ) ) & = \bigoplus_{s=1}^n \ \bigoplus_{j\notin \{j_1<\dots<j_s\}} \Phi(A_{j_1}|\dots|A_{j_s})
\end{split}
\end{equation}
Moreover, we have
\begin{equation}\label{eq:cross_pr2}
\bigcap_{j=1}^{n-1} \Ker(\Phi({\sf pr}^j)) = \Phi(A_1|\dots|A_{n-1}) \oplus \Phi(A_1|\dots|A_n).
\end{equation}
\end{lemma}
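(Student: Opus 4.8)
The plan is to realize the cross-effect decomposition as the simultaneous eigenspace decomposition of a family of commuting idempotents arising from the maps $\mathsf{pr}^j$ and $\mathsf{em}^j$, and then to read off the kernels, images, and their intersection from this eigenspace picture. Everything reduces to understanding how the single endomorphism ``kill the $j$-th coordinate'' acts on the summands.

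First I would record the formal consequences of the identity $\mathsf{pr}^j\circ\mathsf{em}^j=\mathrm{id}$. Applying $\Phi$ shows that $\Phi(\mathsf{pr}^j)$ is a split epimorphism with splitting $\Phi(\mathsf{em}^j)$, so that $q_j:=\Phi(\mathsf{em}^j)\Phi(\mathsf{pr}^j)$ is an idempotent endomorphism of $M:=\Phi(\bigoplus_i A_i)$. Since $\Phi(\mathsf{em}^j)$ is then a split monomorphism and $\Phi(\mathsf{pr}^j)$ a split epimorphism, one gets $\Ker(\Phi(\mathsf{pr}^j))=\Ker(q_j)=\Im(1-q_j)$ and $\Im(\Phi(\mathsf{em}^j))=\Im(q_j)$. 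Moreover the underlying endomorphisms $\mathsf{em}^j\mathsf{pr}^j$ of $\bigoplus_i A_i$, which simply annihilate the respective coordinates, commute with one another, hence so do the idempotents $q_1,\dots,q_n$; in an abelian category this gives a decomposition of $M$ into joint images.

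The heart of the argument is to match the given decomposition $M=\bigoplus_{S}\Phi(A_S)$, where $S=\{j_1<\dots<j_s\}$ and $\Phi(A_S)=\Phi(A_{j_1}|\dots|A_{j_s})$, with this joint eigenspace decomposition; concretely, I would show that $q_j$ restricts to the identity on $\Phi(A_S)$ when $j\notin S$ and to zero when $j\in S$. This is where naturality of the cross effect enters: $\mathsf{em}^j\mathsf{pr}^j$ is the product endomorphism $\bigoplus_i\phi^j_i$ with $\phi^j_i=\mathrm{id}$ for $i\neq j$ and $\phi^j_j=0$, and for product endomorphisms the induced map on $M$ is block-diagonal with the block on $\Phi(A_S)$ equal to $\Phi(\phi^j_{j_1}|\dots|\phi^j_{j_s})$. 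For $j\notin S$ all these entries are identities, so the block is the identity; for $j\in S$ one entry is the zero map, which factors through $0$, and the vanishing \eqref{eq:zero-cross} forces the block to be zero. I expect the bookkeeping here --- verifying that the cross-effect decomposition is natural with respect to such product maps so that the eigenvalue assignment on each $\Phi(A_S)$ is well defined, and that $\Phi(\mathsf{pr}^j)$, $\Phi(\mathsf{em}^j)$ identify the summands with $j\notin S$ on both sides compatibly --- to be the main obstacle; the rest is formal.

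With the eigenvalue description in hand, \eqref{eq:cross_pr1} is immediate: $\Ker(\Phi(\mathsf{pr}^j))=\Ker(q_j)$ is the sum of the summands on which $q_j$ vanishes, namely $\bigoplus_{S:\,j\in S}\Phi(A_S)$, while $\Im(\Phi(\mathsf{em}^j))=\Im(q_j)$ is the sum of those on which $q_j$ is the identity, namely $\bigoplus_{S:\,j\notin S}\Phi(A_S)$, where the term $S=\emptyset$ may be dropped by \eqref{eq:zero-cross}; these are exactly the two stated sums. Finally, for \eqref{eq:cross_pr2} I would intersect: since all the $\Phi(A_S)$ are summands of one fixed direct-sum decomposition of $M$, the intersection $\bigcap_{j=1}^{n-1}\bigoplus_{S:\,j\in S}\Phi(A_S)$ equals $\bigoplus_{S\supseteq\{1,\dots,n-1\}}\Phi(A_S)$, and the only subsets of $\{1,\dots,n\}$ containing $\{1,\dots,n-1\}$ are $\{1,\dots,n-1\}$ and $\{1,\dots,n\}$, which yields $\Phi(A_1|\dots|A_{n-1})\oplus\Phi(A_1|\dots|A_n)$.
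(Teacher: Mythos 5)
Your proof is correct and takes essentially the same route as the paper's: both arguments come down to viewing ${\sf pr}^j$ and ${\sf em}^j$ as componentwise maps whose $j$-th component is zero, invoking functoriality (naturality) of the cross-effect decomposition to get block-diagonality of the induced map, and using the vanishing \eqref{eq:zero-cross} of cross effects with a zero argument, with the intersection formula \eqref{eq:cross_pr2} then read off from \eqref{eq:cross_pr1}. Your idempotent $q_j=\Phi({\sf em}^j)\Phi({\sf pr}^j)$ is just a convenient repackaging of this same mechanism, not a genuinely different argument.
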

\begin{proof}
We can think about the sum $\bigoplus_{i\ne j} A_i$ as about the sum $\bigoplus_{i=1}^n B_i,$ where $B_i=A_i$ for $i\ne j$ and $B_j=0.$ Then ${\sf pr}^j$ can be redefined as ${\sf pr}^j=1_{A_1 \oplus \dots \oplus A_{j-1}} \oplus 0_{A_i,0} \oplus 1_{A_{j+1}\oplus \dots \oplus A_n}$ and ${\sf em}^j$ can be redefined as ${\sf em}^j=1_{A_1 \oplus \dots \oplus A_{j-1}} \oplus 0_{0,A_i} \oplus 1_{A_{j+1}\oplus \dots \oplus A_n}.$
Then, using the functoriality of the crossed effect and \eqref{eq:zero-cross}, we obtain \eqref{eq:cross_pr1}. The formula \eqref{eq:cross_pr2} follows from \eqref{eq:cross_pr1}. 
\end{proof}

\subsection{A chain complex of crossed effects associated with a morphism} 

For a homomorphism $\varphi: B\to A $ and $0\leq i \leq n$ we consider the following maps
\begin{equation}
\Delta^{i,n}_\varphi : A^{\oplus n}\oplus B \longrightarrow A^{\oplus n+1} \oplus B
\end{equation}
defined by 
\begin{equation}
\Delta^{i,n}_\varphi(a_1,\dots, a_n,b) = 
\begin{cases}
(a_1,\dots,a_i,a_i,\dots,a_n,b), & i<n\\
(a_1,\dots, a_n, \varphi(b),b),  & i=n
\end{cases}
\end{equation}
We also use the following notation for the canonical projections and embeddings
\begin{equation}
{\sf pr}^{n+1}_\varphi  : A^{\oplus n}\oplus B
\longrightarrow 
A^{\oplus n},   \hspace{1cm}
{\sf em}^{n+1}_\varphi : 
A^{\oplus n} \longrightarrow A^{\oplus n} \oplus B,
\end{equation}
and set $\Delta^{i,n}_A:=\Delta^{i,n}_{1_A},$ ${\sf pr}^n_A:={\sf pr}^n_{1_A},$ ${\sf em}^n_A={\sf em}^n_{1_A}.$

\begin{lemma}\label{lemma:eq_nabla}
The following equations are satisfied 
\begin{enumerate}
\item Cosimplicial equations $
\Delta^{j,n+1}_\varphi \Delta^{i,n}_\varphi = \Delta^{i,n+1}_\varphi \Delta^{j-1,n}_\varphi,$ where $i<j;$

\item ${\sf pr}^{n+1}_\varphi \Delta^{i,n}_\varphi = \Delta^{i,n-1}_A {\sf pr}^n_\varphi,$ for $i\neq n;$

\item  ${\sf pr}^{n+1}_\varphi \Delta^{n,n}_\varphi = 1_{A^{\oplus n}}\oplus \varphi.$

\item  $\Delta^{i,n}_\varphi {\sf em}^n_\varphi = {\sf em}^{n+1}_\varphi \Delta^{i,n-1}_A,$ for $i\neq n;$

\item $\Delta^{n,n}_\varphi {\sf em}^n_\varphi = {\sf em}^{n+1}_\varphi {\sf em}^n_A  $

\item ${\sf pr}^{n+1}_\varphi \: \Delta_\varphi^{i,n} \: {\sf em}^n_\varphi = \Delta_A^{i,n-1}$ for $i\ne n;$
\item ${\sf pr}^{n+1}_\varphi \: \Delta_\varphi^{n,n} \: {\sf em}^n_\varphi = {\sf em}^n_A.$
\end{enumerate}
\end{lemma}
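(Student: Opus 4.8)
The plan is to treat all seven identities as equalities of explicit $\KK$-linear maps between finite direct sums and to verify each one by evaluating both sides on a general element $(a_1,\dots,a_n,b)$ and comparing coordinates. The natural organizing principle is a case split according to whether a coface index equals the top value $n$ (so that the coface inserts $\varphi(b)$ and thus genuinely involves $\varphi$) or is strictly smaller (so that the coface merely repeats an $A$-coordinate and leaves the $B$-summand untouched). For part (1) I would first observe that, after the identification of $A^{\oplus n}\oplus B$ with $\DK^n(\varphi)$, the maps $\Delta^{i,n}_\varphi$ are (a reindexing of) the coface operators of the cosimplicial module $\DK^\bullet(\varphi)$ of Proposition \ref{prop:DK(f)}; the cosimplicial coface relations $d^j d^i = d^i d^{j-1}$ for $i<j$ then hold by construction. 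Alternatively one checks (1) by hand, splitting into the cases $j\le n$ and $j=n+1$: when $j\le n$ both sides only duplicate $A$-coordinates and the identity is the classical cosimplicial one, and when $j=n+1$ one must track how the inserted coordinate $\varphi(b)$ is positioned relative to the duplication performed by the inner map.

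For the relations (2), (4) and (6), where the hypothesis is $i\ne n$, the key point is that $\Delta^{i,n}_\varphi$ duplicates an $A$-coordinate and acts as the identity on the $B$-summand, whereas ${\sf pr}^{n+1}_\varphi$ and ${\sf em}^{n+1}_\varphi$ only delete or insert the $B$-summand. These two operations act on disjoint coordinates and therefore commute, and after deleting (resp. inserting) the $B$-summand the duplication is exactly the corresponding coface $\Delta^{i,n-1}_A$ for the identity map $1_A$. Thus (2) and (4) follow by a one-line coordinate comparison, and (6) is obtained by composing them together with the obvious relation ${\sf pr}^n_\varphi \, {\sf em}^n_\varphi = 1$ (or, just as quickly, by evaluating the triple composite directly).

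The remaining relations (3), (5) and (7) isolate the top coface $\Delta^{n,n}_\varphi$, the only place where $\varphi$ appears. For (3) one notes that $\Delta^{n,n}_\varphi(a_1,\dots,a_n,b)=(a_1,\dots,a_n,\varphi(b),b)$, so deleting the last $B$-coordinate leaves $(a_1,\dots,a_n,\varphi(b))$, which is by definition $1_{A^{\oplus n}}\oplus\varphi$. For (5) and (7) one precomposes with the zero-insertion ${\sf em}^n_\varphi$ and uses $\varphi(0)=0$: the inserted coordinate $\varphi(0)$ collapses to $0$, so the top coface degenerates into a plain insertion of zeros, which is exactly ${\sf em}^{n+1}_\varphi \, {\sf em}^n_A$ in (5) and ${\sf em}^n_A$ after the final projection in (7). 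I expect the only genuine difficulty—and hence the main obstacle—to be essentially notational: keeping the shifting superscripts on $\Delta$, ${\sf pr}$ and ${\sf em}$ consistent across the composites, and handling the boundary case $i=n$ in (1), where two $\varphi$-insertions meet, with enough care that the duplication and the insertion land in the correct slots. Everything else reduces to routine coordinate bookkeeping.
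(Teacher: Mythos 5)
Your proposal is correct and takes essentially the same route as the paper, whose entire proof of this lemma is the single line ``Direct computation.'' Your coordinate-wise verification---with the organizing shortcuts of identifying the $\Delta^{i,n}_\varphi$ with the cofaces $d^{i+1}$ of $\DK^\bullet(\varphi)$ from Proposition \ref{prop:DK(f)} to obtain (1), and deducing (6) and (7) from (2)--(5) together with ${\sf pr}^n_\varphi\,{\sf em}^n_\varphi = 1$---is exactly that computation, just written out with more structure.
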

\begin{proof}
Direct computation. 
\end{proof}

One can say that we obtain a coaugmented semi-co-simplicial module
\begin{equation}
\begin{tikzcd}
B \ar[r] 
& 
A\oplus B 
\ar[r,shift left=1]
\ar[r,shift right=1]
& 
A^{\oplus 2}\oplus B 
\ar[r,shift left=2]
\ar[r]
\ar[r,shift right=2]
& 
\dots
\end{tikzcd}
\end{equation}
Further we set 
\begin{equation}
\Phi_{[n|1]}(A,B) = \Phi(A|\dots|A|B)
\end{equation}
and define morphisms $h^i_\varphi:\Phi_{[n|1]}(A,B) \to \Phi_{[n+1|1]}(A,B)$ as compositions
\begin{equation}
\begin{tikzcd}
\Phi_{[n|1]}(A,B) \ar[r,"h^i_\varphi"] \ar[d,rightarrowtail] & 
\Phi_{[n+1|1]}(A,B)  \\ 
\Phi(A^{\oplus n}\oplus V) \ar[r,"\Phi(\Delta^i_\varphi)"] & \Phi(A^{n+1}\oplus B) \ar[u,twoheadrightarrow]
\end{tikzcd}    
\end{equation}
Cosimplicial identities for
$\Delta^i_\varphi$ imply cosimplicial identities for $h^i_\varphi$: $h^j_\varphi h^i_\varphi = h^i_\varphi  h^{j-1}_\varphi$ where $i<j$.
So we can consider a cochain complex $C_\Phi(\varphi),$ whose components are
$C_\Phi(\varphi)^n = \Phi_{[n|1]}(A,B)$
and the differential is defined as the alternating sum of $h_i$
\begin{equation}
 C_\Phi(\varphi):\hspace{5mm} \Phi(B) \overset{h^0_\varphi}\longrightarrow \Phi(A|B) \overset{h^0_\varphi - h^1}\longrightarrow \Phi(A|A|B) \overset{h^0_\varphi-h^1_\varphi-h^2_\varphi}\longrightarrow \dots
\end{equation}
We will also use the notation $C_\Phi(A)=C_{\Phi}({\sf id}_A).$

It is easy to see that the construction of the complex is natural by the morphism $\varphi$ is natural by the morphism in the following sense. Any commutative square
\begin{equation}
\begin{tikzcd}
B \ar[r,"\alpha"] \ar[d,"\psi"] & A \ar[d,"\beta"]\\
B'\ar[r,"\psi"] & A'.    
\end{tikzcd}
\end{equation}
induces a morphism 
\begin{equation}
C_\Phi(\alpha,\beta) : C_\Phi(\varphi) \longrightarrow C_\Phi(\psi).
\end{equation}

\subsection{The standard complex for higher limits} For any object $c$ of any category with pairwise coproducts $\CC$ (not necessarily with an initial object) we denote by $c^{\sqcup n}$ the coproduct $\coprod_{i=0}^{n-1} c$ with embeddings $\alpha_i:c\to c^{\sqcup n}$ indexed by $ 0\leq i\leq n-1.$ Any morphism $f: c^{\sqcup n} \to d$ will be written as $f=(f_0,\dots,f_{n-1}),$ where $f_i=f\alpha_i.$ We consider the cosimplicial object $c^\bullet$ whose components are defined by 
\begin{equation}
(c^\bullet)^n = c^{\sqcup n+1}    
\end{equation}
the coface maps $d^i:c^{\sqcup n} \to c^{\sqcup n+1}$ and degeneracy maps $s^i:c^{\sqcup n+2} \to c^{\sqcup n+1}$  are defined by 
\begin{equation}\label{eq:ds}
d^i=(\alpha_0,\dots, \hat{\alpha}_i, \dots, \alpha_n), \hspace{1cm}
s^i=(\alpha_0,\dots,\alpha_i,\alpha_i,\dots,\alpha_n)
\end{equation}
for $0\leq i\leq n.$

\begin{theorem}[{\cite[Th. 2.12]{ivanov2020limits}}]\label{th:Fedor}
Let $\CC$ be a strongly connected category with pairwise coproducts, $\KK$ be a ring and $\Phi:\CC\to {\sf Mod}(\KK)$ be a functor. Then for any object $c$ of $\CC$ and any $i\geq 0$ we have an isomorphism 
\begin{equation}
\lim^i\: \Phi \cong \pi^i\Phi(c^\bullet).
\end{equation}
In particular, the right-hand side  of the isomorphism is independent of $c.$ 
\end{theorem}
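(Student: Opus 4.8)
The plan is to identify the higher limits with Ext-groups in the functor category and to recognise $\Phi(c^\bullet)$ as the complex that computes those Ext-groups against an explicit projective resolution of the constant functor. Write ${\sf Fun}(\CC,{\sf Mod}(\KK))$ for the category of functors and let $\underline{\KK}$ be the constant functor at $\KK$. A natural transformation $\underline{\KK}\to\Phi$ is precisely a compatible family of elements $x_c\in\Phi(c)$, so $\lim\Phi\cong{\sf Hom}_{\sf Fun}(\underline{\KK},\Phi)$; since $\lim$ is the right derived functor of this left exact hom, $\lim^i\Phi\cong{\sf Ext}^i_{\sf Fun}(\underline{\KK},\Phi)$. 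Everything then reduces to building a convenient projective resolution $P_\bullet\to\underline{\KK}$ and matching ${\sf Hom}_{\sf Fun}(P_\bullet,\Phi)$ with $\Phi(c^\bullet)$.

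First I would assemble the candidate resolution from representables. For each $n$ set $P_n=\KK[\CC(c^{\sqcup n+1},-)]$, the free $\KK$-module functor on the representable $\CC(c^{\sqcup n+1},-)$. By the $\KK$-linear Yoneda lemma ${\sf Hom}_{\sf Fun}(P_n,\Phi)\cong\Phi(c^{\sqcup n+1})$ naturally in $\Phi$, so $P_n$ corepresents the exact evaluation functor $\Phi\mapsto\Phi(c^{\sqcup n+1})$ and is therefore projective. The cosimplicial structure on $c^\bullet$ induces contravariantly a simplicial structure on $P_\bullet$, and under the Yoneda identification the cosimplicial module ${\sf Hom}_{\sf Fun}(P_\bullet,\Phi)$ is exactly $\Phi(c^\bullet)$; its associated non-normalised cochain complex is $\CC^\bullet\Phi(c^\bullet)$, so $\pi^i\Phi(c^\bullet)\cong H^i({\sf Hom}_{\sf Fun}(P_\bullet,\Phi))$. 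It remains only to show that $P_\bullet\to\underline{\KK}$, with augmentation $P_0=\KK[\CC(c,-)]\to\underline{\KK}$ sending each generator to $1$, is a resolution.

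The key step, and the only place the hypotheses on $\CC$ are used, is the exactness of the augmented complex $\dots\to P_1\to P_0\to\underline{\KK}\to 0$. Exactness in ${\sf Fun}$ is checked objectwise, so I fix $d\in\CC$ and put $X=\CC(c,d)$. The universal property of the coproduct gives $\CC(c^{\sqcup n+1},d)\cong X^{n+1}$, under which the faces become coordinate deletions and the degeneracies coordinate repetitions; thus $P_\bullet(d)$ is the free $\KK$-module on the simplicial set $[n]\mapsto X^{n+1}$, the \v{C}ech nerve of $X\to\ast$. This simplicial set is contractible as soon as $X\neq\emptyset$, and here the hypothesis that $\CC$ is strongly connected guarantees precisely that $\CC(c,d)\neq\emptyset$ for every $d$. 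Concretely, choosing a basepoint $x_*\in X$ and setting $h(x_0,\dots,x_n)=(x_*,x_0,\dots,x_n)$ yields an extra degeneracy satisfying $\partial h+h\partial={\sf id}$, a contracting homotopy of the augmented complex of $\KK[X^{\bullet+1}]$ onto $\KK$. I expect this to be the main obstacle, in the sense that it is where the hypothesis must be invoked; the contraction depends on the choices of $c$, $d$ and $x_*$ and is not natural, but this is harmless since only objectwise exactness is needed.

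Finally, once $P_\bullet\to\underline{\KK}$ is known to be a projective resolution, the comparison theorem for derived functors gives ${\sf Ext}^i_{\sf Fun}(\underline{\KK},\Phi)\cong H^i({\sf Hom}_{\sf Fun}(P_\bullet,\Phi))=\pi^i\Phi(c^\bullet)$, which is the asserted isomorphism $\lim^i\Phi\cong\pi^i\Phi(c^\bullet)$. The stated independence of the right-hand side from $c$ is then automatic: different choices of $c$ merely produce different projective resolutions of the single object $\underline{\KK}$, and ${\sf Ext}$ does not depend on the resolution chosen.
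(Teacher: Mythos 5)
Your proof is correct; note that the paper itself contains no proof of this theorem --- it is imported by citation from \cite[Th.~2.12]{ivanov2020limits}, with only a remark that the argument given there for $\KK=\ZZ$ works verbatim over any ring, so the only possible comparison is with that source. Your argument is essentially that standard one: identify $\lim^i\Phi$ with $\mathsf{Ext}^i_{\mathsf{Fun}}(\underline{\KK},\Phi)$ via $\lim\Phi\cong \mathsf{Hom}_{\mathsf{Fun}}(\underline{\KK},\Phi)$, and exhibit the linearized \v{C}ech nerve $\KK[\CC(c^{\sqcup \bullet+1},-)]$ as a projective resolution of the constant functor, where strong connectedness (nonemptiness of every hom-set $\CC(c,d)$) supplies exactly the objectwise extra-degeneracy contraction.
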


Further we will use the notation for the chain complex
\begin{equation}
{\sf Lim}_{(c)} \Phi = N^\bullet (\Phi(c^\bullet)) 
\end{equation}

\begin{remark}
The Theorem \ref{th:Fedor} was proved in \cite[Th. 2.12]{ivanov2020limits} only for the case $\KK=\ZZ,$ but the proof for any $\KK$ can be done without any changes.
\end{remark}

\subsection{The relation functor}
Further we consider a $\KK$-module $A$ and the category of its presentations $\Pres(A).$ It is easy to check that this category is strongly connected and has pairwise coproducts $p\sqcup p': F\oplus F'\epi A$. This category is a full subcategory of the comma category ${\sf Mod}(\KK) \downarrow A$, whose objects are epimorphisms from free modules $p : F\epi A.$ The functor 
\begin{equation}
{\sf R}_A : \Pres(A) \longrightarrow {\sf Mod}(\KK), \hspace{1cm} {\sf R}_A(p) = {\sf Ker}(p)
\end{equation}
will be called relation functor. For a giver presentation $p:F\epi A$ we will always use the notation 
$R={\sf R}_A(p).$

\begin{proposition}\label{prop:relation_resolution}
For any presentation $p:F\epi A,$ if we denote by $\varphi_p:R\to F$ the embedding of the kernel, there is a natural isomorphism of cosimplicial modules
\begin{equation}
{\sf R}_A(p^\bullet) \cong \DK^\bullet(\varphi_p)
\end{equation}
(see Proposition \ref{prop:DK(f)}).
\end{proposition}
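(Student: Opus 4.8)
The plan is to prove the statement by writing out both cosimplicial modules degreewise and exhibiting an explicit natural isomorphism between them through a partial-sum substitution, matched against the formulas of Proposition~\ref{prop:DK(f)}. First I would identify the two sides in each degree. Since the coproduct in $\Pres(A)$ is the direct sum of free modules equipped with the induced epimorphism, the component $(p^\bullet)^n=p^{\sqcup n+1}$ is the presentation $\Sigma_n\colon F^{\oplus n+1}\epi A$ with $\Sigma_n(g_0,\dots,g_n)=\sum_{j}p(g_j)$, so that ${\sf R}_A(p^\bullet)^n=\Ker(\Sigma_n)\subseteq F^{\oplus n+1}$. By Proposition~\ref{prop:DK(f)} the target has components $\DK^n(\varphi_p)=F^{\oplus n}\oplus R$, so both sides have the same size. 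Moreover, because ${\sf R}_A$ is the kernel functor, its coface and codegeneracy maps are the restrictions of the maps \eqref{eq:ds} on $F^{\oplus\bullet}$: the coface $d^i$ inserts a zero in slot $i$, and the codegeneracy $s^i$ adds the adjacent entries $g_i$ and $g_{i+1}$.

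The key device is the partial-sum isomorphism. For $(g_0,\dots,g_n)$ put $h_j=g_0+\dots+g_j$; this is an automorphism of $F^{\oplus n+1}$ under which the constraint $\Sigma_n(g)=0$ becomes $p(h_n)=0$, i.e.\ $h_n\in R$, while $h_0,\dots,h_{n-1}$ range freely over $F$. This produces a natural isomorphism $\Theta_n\colon \DK^n(\varphi_p)=F^{\oplus n}\oplus R\to\Ker(\Sigma_n)$ given by
$$\Theta_n(x_0,\dots,x_{n-1},y)=(x_0,\ x_1-x_0,\ \dots,\ x_{n-1}-x_{n-2},\ y-x_{n-1}),$$
whose inverse simply reads off the partial sums; naturality in $p$ is immediate, since only the partial sums and the inclusion $R\hookrightarrow F$ are involved.

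It then remains to check that $\Theta=(\Theta_n)$ respects the cosimplicial structure, by transporting the insert-zero and add-adjacent operations through this substitution. Adding $g_i$ and $g_{i+1}$ deletes the partial sum $h_i$, so $s^i$ turns into $(x_0,\dots,x_n,y)\mapsto(x_0,\dots,\hat x_i,\dots,x_n,y)$, as required, while inserting a zero reindexes the partial sums and reproduces $d^0$ together with the $d^i$ for $1\le i\le n-1$. The one delicate point, which I expect to be the main obstacle, is the top coface $d^n$: here the last partial sum $h_n=y$ lies in $R$ yet is recorded as the $F$-valued output entry, so the inclusion $\varphi_p$ materializes exactly as in $d^n(x_0,\dots,x_{n-2},y)=(x_0,\dots,x_{n-2},\varphi_p(y),y)$. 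This is precisely the place where the differential of the two-term complex enters; the remaining cofaces and codegeneracies are pure index bookkeeping. As all constructions are functorial in the presentation, the family $\Theta$ is a natural isomorphism of cosimplicial modules.

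Alternatively, one can bypass the componentwise comparison altogether: computing the normalized cochain complex shows that $\bigcap_i\Ker(s^i)$ vanishes in degrees $\ge 2$, equals $\{(g,-g)\}\cong F$ in degree $1$ and $R$ in degree $0$, with differential $\varphi_p$, so that $\NN^\bullet{\sf R}_A(p^\bullet)\cong(R\xrightarrow{\varphi_p}F)$; applying the Dold--Kan equivalence $\DK^\bullet$ then yields the claim directly.
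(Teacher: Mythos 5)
Your main argument is exactly the paper's proof: the paper defines $\theta_n:\Ker(p^{\sqcup n+1})\to F^{\oplus n}\oplus R$ by partial sums, and its inverse $\theta_n^{-1}$ is literally your $\Theta_n$; the paper then asserts compatibility with the cofaces and codegeneracies as ``a direct computation,'' which you spell out in more detail (correctly identifying the top coface, where $\varphi_p$ appears, as the only non-trivial check). Your closing alternative, however, is a genuinely different and also valid route: one computes $\NN^\bullet{\sf R}_A(p^\bullet)$ directly --- it is $R$ in degree $0$, the antidiagonal copy $\{(g,-g)\}\cong F$ in degree $1$, and $0$ in degrees $\geq 2$ --- and then invokes the unit of the Dold--Kan equivalence, ${\sf R}_A(p^\bullet)\cong\DK^\bullet\NN^\bullet{\sf R}_A(p^\bullet)$, to conclude. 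This trades the componentwise bookkeeping for an appeal to the equivalence, and is arguably slicker; the only point to be careful about is that the restricted differential $d^0-d^1$ sends $r\mapsto(-r,r)$, so the identification of $\NN^1$ with $F$ must be chosen as $(g,-g)\mapsto -g$ (or one absorbs a sign by an automorphism) to get the differential $\varphi_p$ on the nose rather than $-\varphi_p$. That is a matter of normalization, not a gap.
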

\begin{proof}
The coproduct in the category $\Pres(A)$ is defined by $p\sqcup p': F\oplus F' \to A.$ Therefore $p^{\sqcup n+1}: F^{\oplus n+1} \to A.$ The kernel of $p^{\sqcup n+1}$ consists of tuples $(a_0,\dots,a_{n})$ such that $\sum_{i=0}^n a_i\in R.$ Consider a map 
\begin{equation}
\theta_n : {\sf Ker}(p^{\sqcup n+1}) \longrightarrow F^{\oplus n}\oplus R,
\end{equation}
\begin{equation}
\theta_n(a_0,\dots,a_{n})=(a_0,\ a_0+a_1,\ a_0+a_1+a_2, \  \dots,\ \sum_{i=0}^n a_i).
\end{equation}
The homomorphism $\theta_n$ is an isomorphism with the inverse given by 
\begin{equation}
\theta_n^{-1}(a_0,\dots,a_{n-1},r)=(a_0,\ a_1-a_0, \ \dots,\ a_{n-1}-a_{n-2}, \ r-a_{n-1}).  
\end{equation}
By the definition \eqref{eq:ds} the cofaces and codeneracy maps of ${\sf R}_A(p^\bullet)$ are defined by the formulas
\begin{equation}
\begin{split}
d^i_{{\sf R}_A(p^\bullet)}(a_0,\dots,a_n) &= (a_0,\dots,a_{i-1},0,a_{i},\dots,a_n), \\ s^i_{{\sf R}_A(p^\bullet)}(a_0,\dots,a_n) &=(a_0,\dots,a_i+a_{i+1},\dots,a_n).
\end{split}
\end{equation}
A direct computation shows that $\theta_n$ respects the face and degeneracy maps. 
\end{proof}

\subsection{Limits of compositions with the relation functor}

Let $\KK$ be a commutative ring and 
\begin{equation}
\Phi:
{\sf Mod}(\KK) \longrightarrow 
{\sf Mod}(\KK)
\end{equation}
be a functor. For any $\KK$-module $A$ we are interested in the higher limits of the composition with the relation functor
\begin{equation}
\Phi {\sf R}_A : {\sf Pres}(A) \longrightarrow {\sf Mod}(\KK).
\end{equation}

\begin{theorem}\label{th:cross_effect_comlex} Let $\Phi:{\sf Mod}(\KK)\to {\sf Mod}(\KK)$ be a functor such that $\Phi(0)=0$ and $p:F\epi A$ be any presentation. Denote by $\varphi_p:R\to F$ the embedding of the kernel. Then there are natural isomorphisms
\begin{equation}
\begin{split}
{\sf Lim}_{(p)} \Phi {\sf R}_A &\cong {\sf Cone}(C_\Phi(1_F,\varphi_p):C_\Phi(\varphi) \to C_\Phi(F))[-1] . 
\end{split}
\end{equation}
\end{theorem}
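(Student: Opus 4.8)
The plan is to compute the cochain complex ${\sf Lim}_{(p)}\Phi{\sf R}_A = \NN^\bullet(\Phi({\sf R}_A(p^\bullet)))$ explicitly, degree by degree, and then to recognise its differential as a mapping cone. First I would make the cosimplicial module concrete. By Proposition \ref{prop:relation_resolution} we have ${\sf R}_A(p^\bullet)\cong \DK^\bullet(\varphi_p)$, so by Proposition \ref{prop:DK(f)} the module in cosimplicial degree $n$ is $\Phi(F^{\oplus n}\oplus R)$, and the structure maps are induced by the explicit formulas there: each codegeneracy $s^i$ deletes one of the $n$ copies of $F$, while the cofaces either insert a zero into a new first copy of $F$ ($d^0$), duplicate one of the existing copies ($d^1,\dots,d^{n}$), or append $\varphi_p(y)$ as a new copy of $F$ while retaining the $R$-coordinate $y$ (the last coface).

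Next I would identify the normalised complex componentwise. Since $\NN^nV=\bigcap_i\Ker(s^i)$ and each $s^i$ is, as a map of modules, a canonical projection deleting one summand, this feeds directly into Lemma \ref{lemma:cross_pr}: regarding $F^{\oplus n}\oplus R$ as a sum of $n+1$ summands with the copies of $F$ first and $R$ last, formula \eqref{eq:cross_pr2} gives
$$\NN^n\,\Phi({\sf R}_A(p^\bullet))=\bigcap_{i}\Ker(\Phi(s^i)) \cong \Phi(\underbrace{F|\dots|F}_{n}|R)\ \oplus\ \Phi(\underbrace{F|\dots|F}_{n}).$$
The first summand is $C_\Phi(\varphi_p)^n$ and the second is $C_\Phi(F)^{n-1}$, which are precisely the two summands of ${\sf Cone}(C_\Phi(1_F,\varphi_p))[-1]$ in degree $n$. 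So the underlying graded objects already match.

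It remains to compute the differential $\sum_i(-1)^i\Phi(d^i)$ restricted to the normalised part and to match it, summand by summand, with the cone differential $\NN^n\to\NN^{n+1}$. To decompose each contribution I would compose $\Phi(d^i)$ with the two cross-effect projections onto $\Phi(F|\dots|F|R)$ and $\Phi(F|\dots|F)$ in degree $n+1$ and use \eqref{eq:zero-cross} repeatedly. The coface $d^0$ factors through the configuration with a zero first $F$-coordinate, so its image meets no cross effect involving that coordinate and dies under the projection onto $\NN^{n+1}$; hence $d^0$ contributes nothing. The duplicating cofaces $d^1,\dots,d^n$ leave the $R$-coordinate untouched, so composing with the projection that forgets $R$ annihilates the cross-effect $\Phi(F|\dots|F|R)$; thus they carry $\Phi(F|\dots|F|R)$ into $\Phi(F|\dots|F|R)$ and $\Phi(F|\dots|F)$ into $\Phi(F|\dots|F)$ with no cross terms, and matching them against the definition of $h^i_{\varphi_p}$ and $h^i_{1_F}$ (built from the same duplication maps) reassembles them into the internal differentials of $C_\Phi(\varphi_p)$ and of $C_\Phi(F)$. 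The only coface crossing between the two summands is the last one: its diagonal component landing in $\Phi(F|\dots|F|R)$ reproduces $h^n_{\varphi_p}$, completing the differential of $C_\Phi(\varphi_p)$, while its component in $\Phi(F|\dots|F)$, obtained by composing with the projection forgetting $R$, is governed by Lemma \ref{lemma:eq_nabla}(3), which identifies ${\sf pr}^{n+1}_\varphi\Delta^{n,n}_\varphi=1_{F^{\oplus n}}\oplus\varphi_p$; applying $\Phi$ and restricting to the cross effect gives exactly $\Phi(1_F|\dots|1_F|\varphi_p)=C_\Phi(1_F,\varphi_p)^n$. Assembling these pieces yields the cone differential.

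The main obstacle I anticipate is this last paragraph: extracting, through the cross-effect decomposition, the off-diagonal connecting map from the single coface that uses $\varphi_p$, and checking that it is genuinely $C_\Phi(1_F,\varphi_p)$ rather than merely something isomorphic to it. Bundled with this is the bookkeeping of signs, which must be reconciled with the chosen sign convention for ${\sf Cone}(-)[-1]$; I would fix the cone convention first and then, if necessary, absorb any residual discrepancy into an isomorphism of complexes rescaling each summand by $\pm1$ in each degree.
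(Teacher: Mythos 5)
Your proposal is correct and takes essentially the same route as the paper's own proof: identify the normalized components as $\Phi_{[n|1]}(F,R)\oplus\Phi_{[n]}(F)$ via Lemma \ref{lemma:cross_pr}, compute the differential coface by coface (the contribution of $d^0$ vanishing, the duplicating cofaces acting diagonally as the $h^i_{\varphi}$ and $h^i_F$, and the last coface producing the off-diagonal map $\Phi_{[n|1]}(1,\varphi)$ via Lemma \ref{lemma:eq_nabla}(3)), and then reconcile signs with the cone convention. Even your final sign-fixing step is exactly what the paper does, conjugating by the degree-wise automorphism $\theta^n$ given by the matrix $\left(\begin{smallmatrix}(-1)^n & 0\\ 0 & 1\end{smallmatrix}\right)$.
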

\begin{proof} Set $N:={\sf Lim}_{(p)} \Phi {\sf R}_A.$
Since ${\sf R}^\bullet(p)^n=F^n\oplus R,$ and $s^i_{{\sf R}^\bullet(p)}={\sf pr}^i:F^n\oplus R\to  F^{n-1}\oplus R$ are projections, Lemma \ref{lemma:cross_pr} implies that components of the normalised complex can be described as
\begin{equation}
N^n = \Phi_{[n|1]}(F,R) \oplus \Phi_{[n]}(F).
\end{equation}
So now we need to describe the differentials $\partial^n_{N},$ which are restrictions of the alternating sum $\sum_{i=0}^{n-1} (-1)^id^i_{ \Phi( {\sf R}^\bullet(p) ) },$ and prove that they coincide with the differential for the cone. 

Further in this prove we will use the following notations $d^i=d^i_{{\sf R}^\bullet(p)};$ $d^i_\Phi=d^i_{\Phi({\sf R}^\bullet(p))} = \Phi(d^i_{{\sf R}^\bullet(p)});$
\begin{equation}
\pi: \Phi(F^{\oplus n}\oplus R ) \to \Phi_{[n|1]}(F,R), \hspace{1cm} \pi': \Phi(F^{\oplus n}\oplus R ) \to \Phi_{[n]}(F)
\end{equation}
are the projections; 
\begin{equation}
\rho : \Phi_{[n|1]}(F,R) \to \Phi(F^{\oplus n}\oplus R),
\hspace{1cm}
\rho'  : \Phi_{[n]}(F) \to \Phi(F^{\oplus n}\oplus R)
\end{equation}
are the embeddings. 
Therefore, $\partial^n$ can be described as the alternating sum of matrices
\begin{equation}
M_i\coloneq
\left(
\begin{matrix}
\pi d^i_\Phi \rho 
& 
\pi d^i_\Phi \rho'
\\ 
\pi' d^i_\Phi \rho
&
\pi' d^i_\Phi \rho'
\end{matrix}
\right)
\end{equation}
for $0\leq i\leq n+1.$
We will also consider the maps 
\begin{equation}
\tilde \pi: \Phi(F^{\oplus n}) \longrightarrow  \Phi_{[n]}(F), 
\hspace{1cm}
\tilde \rho: \Phi_{[n]}(F) \longrightarrow \Phi(F^{\oplus n}).
\end{equation}
and the maps 
\begin{equation}
{\sf em}^{n}_\varphi: F^{\oplus n} \longrightarrow F^{\oplus n}\oplus R, 
\hspace{1cm}   
{\sf pr}^{n}_\varphi: F^{\oplus n}\oplus R \longrightarrow F^{\oplus n}.
\end{equation}
Then we have
\begin{equation}
\pi' =  \tilde \pi \Phi({\sf pr}^{n}_\varphi), \hspace{1cm} \rho' =   \Phi({\sf em}^{n}_\varphi)\tilde \rho. 
\end{equation}

The map
$d^0$ is the embedding $F^n\oplus R \to F^{n+1}\oplus R,$ which is trivially mapped to the first summand. Therefore the image of $d^0_\Phi$ is the direct sum of all crossed effects, where the first summand does not appear (Lemma \ref{lemma:cross_pr}). Therefore $\pi d^0_\Phi=0$ and $\pi' d^0_\Phi=0,$ and hence 
$M_0=0.$

Further we will freely use the equations from Lemma \ref{lemma:eq_nabla} for the computations. Now assume that $0\leq i\leq n.$  Then $d^i = \Delta^{i-1}_\varphi.$ So we have 
\begin{equation}
\pi d^i_\Phi \rho = \pi \Phi(\Delta^{i-1}_\varphi) \rho  = h^{i-1}_\varphi,
\end{equation}
and
\begin{equation}
\pi' d^i_\Phi \rho = \tilde \pi  \Phi({\sf pr}^{n+1}_\varphi\: \Delta^{i-1}_\varphi) \rho = \tilde \pi  \Phi(\Delta^{i-1}_F\: {\sf pr}^n_\varphi ) \rho =0
\end{equation}
because by Lemma \ref{lemma:cross_pr} $\Phi({\sf pr}^n_\varphi) \rho=0.$ Further we have
\begin{equation}
\pi d^i_\Phi \rho' = \pi \Phi(\Delta_\varphi^{i-1}{\sf em}^n_\varphi) \tilde \rho =  \pi \Phi({\sf em}^{n+1}_\varphi \Delta_F^{i-1} ) \tilde \rho = 0
\end{equation}
because by Lemma \ref{lemma:cross_pr} 
$\pi\Phi({\sf em}_\varphi^{n+1})=0$.
Finally we have
\begin{equation}
\pi' d^i_\Phi \rho' =\tilde \pi \Phi( {\sf pr}^{n+1}_\varphi \Delta^{i-1}_\phi {\sf em}^{n}_\varphi ) \tilde \rho = \tilde \pi \Phi(\Delta^{i-1}_F) \tilde \rho = h^{i-1}_F.
\end{equation}
Therefore for $1\leq i\leq n$ we have 
\begin{equation}
M_i=
\left(
\begin{matrix}
h^{n}_\varphi 
& 
0
\\ 
0
&
h^{i-1}_F
\end{matrix}
\right).
\end{equation}

Now assume that $i=n+1.$ Then $d^{n+1}=\Delta^n_\varphi.$ Similarly to the previous case we obtain 
\begin{equation}
\pi d^i_\Phi \rho = h^n_\varphi.
\end{equation}
Further we have 
\begin{equation}
\pi' d^{n+1}_\Phi \rho =  \tilde\pi \Phi( {\sf pr}^{n+1}_\varphi \Delta^{n}_\varphi ) \rho =  \tilde\pi \Phi( 1\oplus \varphi ) \rho = \Phi_{[n|1]}(1,\varphi),
\end{equation}
and 
\begin{equation}
\pi d^{n+1}_\Phi \rho' = \pi \Phi( \Delta_\varphi^n {\sf em}^n_\varphi ) \tilde \rho 
= 
\pi \Phi({\sf em}^{n+1}_\varphi {\sf em}^n_F) \tilde \rho = 0,
\end{equation}
because $\pi \Phi({\sf em}^{n+1}_\varphi)=0.$ Finally we have 
\begin{equation}
\pi' d^{n+1}_\Phi \rho' 
=
\tilde \pi \Phi( {\sf pr}^{n+1}_\varphi \Delta^n_\varphi {\sf em}^n_\varphi ) 
\tilde \rho
= \tilde \pi \Phi( {\sf em}_F^n ) \tilde \rho =0,
\end{equation}
because $\tilde \pi \Phi( {\sf em}_F^n )=0.$ Therefore
\begin{equation}
M_{n+1} = 
\left(
\begin{matrix}
h^{n}_\varphi 
& 
0
\\ 
\Phi_{[n|1]}(1,\varphi)
&
0
\end{matrix}
\right).
\end{equation}
Therefore the differential will be given by 
\begin{equation}
\partial^n_N =
\left(
\begin{matrix}
 - \sum_{i=0}^{n} (-1)^i h^{i}_\varphi 
& 
0
\\ 
(-1)^{n+1} \Phi_{[n|1]}(1,\varphi)
&
- \sum_{i=0}^{n-1} (-1)^i h^{i}_F 
\end{matrix}
\right).
\end{equation}
It follows that
\begin{equation}
\partial^n_{N[1]} =
\left(
\begin{matrix}
\partial^{n+1}_{C_\Phi(\varphi)} 
& 
0
\\ 
(-1)^{n} \Phi_{[n+1|1]}(1,\varphi)
&
 \partial^{n}_{C_\Phi(F)} 
\end{matrix}
\right).
\end{equation}
Consider an automorphism  $\theta^n : N^n\to N^n$ defined by a matrix $ 
\left( \begin{smallmatrix}
 (-1)^n & 0 \\
 0 & 1
\end{smallmatrix} 
\right)
$ Then 
\begin{equation}
(\theta^{n+1})^{-1} 
\partial^n_{N[1]} 
\theta^n 
=
\left(
\begin{matrix}
-\partial^{n+1}_{C_\Phi(\varphi)} 
& 
0
\\ 
\Phi_{[n+1|1]}(1,\varphi)
&
 \partial^{n}_{C_\Phi(F)} 
\end{matrix}
\right).
\end{equation}
The right-hand side of the above formula is the formula for the differential of the cone. Therefore, $\theta^n$ defines an isomorphism between $N[1]$ and ${\sf Cone}(C_\Phi(1_F,\varphi):C_\Phi(\varphi) \to C_\Phi(F)).$ 
\end{proof}

\begin{corollary}\label{cor:Lim(id)} Under the assumption of Theorem \ref{th:cross_effect_comlex},
if $A$ is a free $\KK$-module, we obtain 
\begin{equation}
{\sf Lim}_{({\sf id})}\Phi {\sf R}_A \cong C_\Phi(A)[-1] 
\end{equation}
\end{corollary}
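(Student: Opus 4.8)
The plan is to apply Theorem \ref{th:cross_effect_comlex} to the tautological presentation. Since $A$ is free, the identity map ${\sf id}_A : A \epi A$ is a genuine object of $\Pres(A)$; this is the only place where freeness is used. For this choice we have $F = A$ and $R = {\sf R}_A({\sf id}_A) = \Ker({\sf id}_A) = 0$, so the kernel embedding $\varphi_p : R \to F$ is the zero map $0 \to A$. By definition ${\sf Lim}_{({\sf id})}\Phi {\sf R}_A$ is exactly the standard complex attached to the object ${\sf id}_A$, so Theorem \ref{th:cross_effect_comlex} applied to $p = {\sf id}_A$ directly yields
\begin{equation}
{\sf Lim}_{({\sf id})}\Phi {\sf R}_A \cong {\sf Cone}\bigl(C_\Phi(1_A,\varphi_p) : C_\Phi(\varphi_p) \to C_\Phi(A)\bigr)[-1].
\end{equation}

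Next I would identify the source complex $C_\Phi(\varphi_p)$ as the zero complex. Its $n$-th component is $\Phi_{[n|1]}(A, R) = \Phi(A|\dots|A|R)$ with $R = 0$. Since a crossed effect vanishes as soon as one of its arguments is zero, by \eqref{eq:zero-cross} every component equals $\Phi(A|\dots|A|0) = 0$, and hence $C_\Phi(\varphi_p) = 0$ as a cochain complex.

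Finally, the mapping cone of a chain map out of the zero complex is canonically isomorphic to its target. Concretely, in the explicit cone differential exhibited at the end of the proof of Theorem \ref{th:cross_effect_comlex}, both entries of the column indexed by $C_\Phi(\varphi_p)$ have domain $0$, so the $C_\Phi(\varphi_p)$-block disappears and what remains is precisely $C_\Phi(A)$ together with its own differential $\partial^n_{C_\Phi(A)}$. Thus ${\sf Cone}(0 \to C_\Phi(A)) \cong C_\Phi(A)$, and combining this with the shift $[-1]$ gives the claimed isomorphism ${\sf Lim}_{({\sf id})}\Phi {\sf R}_A \cong C_\Phi(A)[-1]$. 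I do not expect any genuine obstacle: the whole argument is the specialization $R = 0$ made possible by freeness, followed by the vanishing of crossed effects on a zero argument and the triviality of a cone on a zero source. The only point deserving a line of care is checking that ${\sf id}_A$ genuinely lies in $\Pres(A)$, which is exactly the freeness hypothesis.
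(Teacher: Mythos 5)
Your proof is correct and coincides with the argument the paper intends: the corollary is stated as an immediate specialization of Theorem \ref{th:cross_effect_comlex} to $p={\sf id}_A$ (legitimate precisely because $A$ is free), where $R=0$ forces $C_\Phi(\varphi_p)^n=\Phi(A|\dots|A|0)=0$ by \eqref{eq:zero-cross}, so the cone collapses to its target $C_\Phi(A)$. All three steps you flag — membership of ${\sf id}_A$ in $\Pres(A)$, vanishing of crossed effects with a zero argument, and the triviality of a cone on a zero source — are exactly the right points of care, and none hides any difficulty.
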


\begin{corollary}
If $\Phi$ is a polynomial functor of degree $d,$ then 
\begin{equation}
\lim^i \Phi{\sf R}_A = 0, \hspace{1cm} i>d. 
\end{equation}
\end{corollary}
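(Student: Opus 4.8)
The plan is to deduce this immediately from Theorem \ref{th:cross_effect_comlex} by a degree count on the relevant cochain complex. Fix any presentation $p : F \epi A$ with kernel embedding $\varphi_p : R \to F$, and write $N = {\sf Lim}_{(p)}\Phi{\sf R}_A$. By Theorem \ref{th:Fedor} together with the definition ${\sf Lim}_{(c)}\Phi = N^\bullet(\Phi(c^\bullet))$, we have $\lim^i\Phi{\sf R}_A \cong H^i(N)$, so it suffices to prove that the cochain complex $N$ vanishes in every cohomological degree $n > d$.

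The decisive input is the explicit shape of $N$ already computed inside the proof of Theorem \ref{th:cross_effect_comlex}, namely
\begin{equation*}
N^n = \Phi_{[n|1]}(F,R) \oplus \Phi_{[n]}(F).
\end{equation*}
(The same description reads off the cone formula: $N \cong {\sf Cone}(C_\Phi(\varphi_p)\to C_\Phi(F))[-1]$ has $n$-th term $C_\Phi(\varphi_p)^n \oplus C_\Phi(F)^{n-1}$, and $C_\Phi(F)^{n-1}=\Phi_{[n-1|1]}(F,F)=\Phi_{[n]}(F)$.) Now I invoke polynomiality: $\Phi$ having degree $\leq d$ means $\Phi_{d+1}=0$, i.e.\ every crossed effect with at least $d+1$ arguments vanishes. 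The first summand $\Phi_{[n|1]}(F,R)=\Phi(F|\dots|F|R)$ is a crossed effect in $n+1$ arguments, hence vanishes once $n+1\geq d+1$, that is for $n\geq d$; the second summand $\Phi_{[n]}(F)$ is a crossed effect in $n$ arguments, hence vanishes for $n\geq d+1$. Therefore $N^n = 0$ for every $n\geq d+1$, so $N$ is concentrated in degrees $0,\dots,d$, and in particular $H^i(N)=0$ for all $i>d$, which is exactly the claim.

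There is essentially no obstacle here beyond bookkeeping: all the genuine work sits in Theorem \ref{th:cross_effect_comlex}, and the corollary is a one-line consequence. The only point requiring a little care is the argument count — the $R$-summand carries one more argument than the corresponding $F$-summand, so the two families of summands drop out at the slightly different thresholds $n\geq d$ and $n\geq d+1$. This also shows the bound is sharp along this complex: the top potentially nonzero term is the $F$-summand $\Phi_{[d]}(F)$ in degree $n=d$, so no stronger vanishing can be extracted this way, consistent with the nonvanishing of $\lim^d$ exhibited for the symmetric powers in the introduction.
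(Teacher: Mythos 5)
Your proof is correct and is exactly the argument the paper intends: the corollary is stated as an immediate consequence of Theorem \ref{th:cross_effect_comlex}, whose complex has components $\Phi_{[n|1]}(F,R)\oplus\Phi_{[n]}(F)$, and these crossed effects vanish in degrees $n>d$ (respectively $n\geq d$ for the first summand) by polynomiality, so the cohomology vanishes above degree $d$. Your remark on the sharpness of the bound at $n=d$ is a nice extra observation consistent with the nonvanishing of $\lim^d S^d{\sf R}_A$ shown later in the paper.
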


\section{Kuhn duality and higher limits}

In this section we assume that $\KK$ is a principal ideal domain. In this case, for a finitely generated module $A$ and a finitely generated presentation $R\mono F \epi A$ the module $R$ is also free and finitely generated.

For a module $A$ we set $A^\vee={\sf Hom}_{\KK}(A,\KK).$ For a functor $\Phi:{\sf Mod}(\KK)\to {\sf Mod}(\KK)$ we denote by $\Phi^\# : {\sf Mod}(\KK)\to {\sf Mod}(\KK)$ the functor defined by the formula 
\begin{equation}
\Phi^\#(A) := \Phi(A^\vee)^\vee.
\end{equation}
The functor $\Phi^\#$ is known as Khun dual to the functor $\Phi.$

\begin{proposition}
Let $A$ be a finitely generated $\KK$-module, and $p:F\epi A$ be a finitely generated presentation, then there is an isomorphism 
\begin{equation}
   ( {\sf Lim}_{(p)} \Phi {\sf R}_A )^\vee \cong  N_\bullet (\Phi^\# ( {\sf DK}_\bullet( F^\vee \to R^\vee ) )). 
\end{equation}
If we also assume that $\Phi$ takes finitely generated free modules to finitely generated free modules, we obtain 
\begin{equation}\label{eq:lim_dual}
{\sf Lim}_{(p)} \Phi {\sf R}_A  \cong  (N_\bullet (\Phi^\# ( {\sf DK}_\bullet( F^\vee \to R^\vee ) )))^\vee. 
\end{equation}
\end{proposition}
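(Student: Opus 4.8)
The plan is to reduce the statement to two formal facts about the Dold–Kan correspondence and biduality, applied to the cosimplicial module $\Phi({\sf R}_A(p^\bullet))$. By the definition of ${\sf Lim}_{(p)}$ and Proposition \ref{prop:relation_resolution} we have
\begin{equation*}
{\sf Lim}_{(p)}\Phi{\sf R}_A = \NN^\bullet(\Phi({\sf R}_A(p^\bullet))), \qquad {\sf R}_A(p^\bullet)\cong \DK^\bullet(\varphi_p),
\end{equation*}
while the right-hand side of the asserted isomorphism is $\NN_\bullet(\Phi^\#(\DK_\bullet(F^\vee\to R^\vee)))$. So it is enough to (a) move the duality $(-)^\vee$ past the normalization functor, turning a cochain complex into a chain complex, and (b) identify the simplicial module $(\Phi({\sf R}_A(p^\bullet)))^\vee$ with $\Phi^\#(\DK_\bullet(F^\vee\to R^\vee))$.

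For (a) I would record the general principle that levelwise dualization intertwines the cosimplicial and simplicial normalizations: for any cosimplicial module $W$, the functor $(-)^\vee$ produces a simplicial module $W^\vee$ (cofaces becoming faces, codegeneracies becoming degeneracies) and there is a natural isomorphism $(\NN^\bullet W)^\vee\cong \NN_\bullet(W^\vee)$, together with its simplicial mirror $(\NN_\bullet Y)^\vee\cong\NN^\bullet(Y^\vee)$. The cleanest justification uses the natural splitting $\CC V\cong \NN V\oplus \DD V$ recalled in Section 2: since $\NN$ is a natural direct summand of $\CC$, it is preserved by the additive (contravariant) functor $(-)^\vee$, and the normalization projectors—being universal expressions in faces and degeneracies—are interchanged with their simplicial counterparts under dualization; no exactness of $(-)^\vee$ is required.

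For (b) I would use that $\Phi^\#$ applied levelwise to a simplicial object $X$ is exactly $(\Phi(X^\vee))^\vee$, by the very definition $\Phi^\#(B)=\Phi(B^\vee)^\vee$. Taking $X=\DK_\bullet(F^\vee\to R^\vee)$, it then suffices to identify the cosimplicial module $X^\vee$ with ${\sf R}_A(p^\bullet)$. This is where biduality enters: normalizing $X^\vee$ and applying (a) gives $\NN^\bullet(X^\vee)\cong(\NN_\bullet X)^\vee=(F^\vee\to R^\vee)^\vee$, and dualizing the two-term chain complex $(F^\vee\to R^\vee)$ returns the cochain complex $(\varphi_p:R\to F)$ once we use $R^{\vee\vee}\cong R$ and $F^{\vee\vee}\cong F$, which hold because over a principal ideal domain the finitely generated modules $R$ and $F$ are free. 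Since $\NN^\bullet(\DK^\bullet(\varphi_p))$ is this same complex, the Dold–Kan equivalence forces $X^\vee\cong\DK^\bullet(\varphi_p)\cong{\sf R}_A(p^\bullet)$. Applying $\Phi$ and $(-)^\vee$ gives $\Phi^\#(\DK_\bullet(F^\vee\to R^\vee))\cong(\Phi({\sf R}_A(p^\bullet)))^\vee$, and normalizing via (a) yields the first isomorphism. For the second isomorphism, under the hypothesis that $\Phi$ preserves finitely generated free modules the terms $\Phi_{[n|1]}(F,R)\oplus\Phi_{[n]}(F)$ of ${\sf Lim}_{(p)}\Phi{\sf R}_A$ (Theorem \ref{th:cross_effect_comlex}) are direct summands of the free module $\Phi(F^{\oplus n}\oplus R)$, hence finitely generated free; therefore biduality is an isomorphism on ${\sf Lim}_{(p)}\Phi{\sf R}_A$, and dualizing the first isomorphism produces the second.

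I expect the main obstacle to be step (a): since $(-)^\vee$ is only left exact, one cannot blindly commute it with the kernel and cokernel descriptions of the normalized complexes, and the argument must instead run through the natural direct-sum splitting (or the normalization idempotent), so that only additivity of $(-)^\vee$ is used. A secondary point requiring care is to apply biduality $M\cong M^{\vee\vee}$ only to the free modules $F,R$ and their finite sums, never to the possibly non-free outputs of $\Phi$; this is exactly what keeps the first isomorphism valid without any freeness assumption on $\Phi$.
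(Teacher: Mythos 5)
Your proof is correct, and its skeleton is the same as the paper's: both arguments come down to identifying the levelwise dual of the cosimplicial module $\Phi({\sf R}_A(p^\bullet))$ with the simplicial module $\Phi^\#(\DK_\bullet(F^\vee\to R^\vee))$ --- via Proposition \ref{prop:relation_resolution}, the definition of $\Phi^\#$, and biduality of the finitely generated free modules $F^{\oplus n}\oplus R$ --- and then pass to normalized complexes; your treatment of the second isomorphism (cross effects are direct summands of finitely generated free modules, hence finitely generated free over a principal ideal domain, so biduality applies to the components of ${\sf Lim}_{(p)}\Phi{\sf R}_A$ given by Theorem \ref{th:cross_effect_comlex}) is word-for-word the paper's. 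The differences are tactical, in two sub-steps. Where the paper obtains ${\sf R}_A(p^\bullet)^\vee\cong\DK_\bullet(F^\vee\to R^\vee)$ in one line from \eqref{eq:additive_DK} (compatibility of $\DK$ with additive functors, read contravariantly for $(-)^\vee$), you instead compute $\NN^\bullet(X^\vee)\cong(\NN_\bullet X)^\vee\cong(\varphi_p:R\to F)$ and invoke the fact that the Dold--Kan equivalence reflects isomorphisms; both are valid, the paper's being shorter, yours avoiding the variance bookkeeping hidden in applying \eqref{eq:additive_DK} to a contravariant functor. More substantively, your step (a) --- establishing $(\NN^\bullet W)^\vee\cong\NN_\bullet(W^\vee)$ through the natural splitting $\CC W\cong\NN W\oplus\DD W$ and the dualization of the normalization idempotents, using only additivity of $(-)^\vee$ --- makes explicit precisely the point that the paper's proof passes over in silence, and you are right that it cannot be obtained by naively dualizing the kernel description \eqref{eq:ker_si} of $\NN^\bullet$, since $(-)^\vee$ is merely left exact. (One small simplification: the mirror direction $(\NN_\bullet Y)^\vee\cong\NN^\bullet(Y^\vee)$, which you also route through the splitting, is in fact immediate from the paper's cokernel definition of $\NN_\bullet$, because $(-)^\vee$ takes cokernels to kernels.)
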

\begin{proof}
Since ${\sf R}_A(p^\bullet)\cong {\sf DK}^\bullet(R\mono F)$ by 
Proposition \ref{prop:relation_resolution}, using \eqref{eq:additive_DK}  we obtain ${\sf R}^\bullet(p)^\vee={\sf DK}_\bullet(F^\vee \to  R^\vee).$ Since for a finitely generated module $F$ we have a natural isomorphism $(F^\vee)^\vee\cong F,$ we obtain
\begin{equation}
\Phi({\sf R}^\bullet(p))^\vee = \Phi^\#({\sf R}^\bullet(p)^\vee) =  \Phi^\# ( {\sf DK}_\bullet( F^\vee \to R^\vee ) ). 
\end{equation}
The first isomorphism of the statement follows from Proposition \ref{prop:relation_resolution}. The second isomorphism follows from the fact that, if $\Phi(F^{\oplus n})$ and $\Phi(F^{\oplus n}\oplus R)$ are free finitely generated, then the cross effects $\Phi_{[n]}(F),$ $\Phi_{[n|1]}(F,R)$ are also free finitely generated modules (because they are direct summands and $\KK$ is a principal ideal domain), and hence, by Theorem \ref{th:cross_effect_comlex}, components of ${\sf Lim}_{(p)}\Phi {\sf R}_A$ are also free finitely generated.   
\end{proof}
Further we set 
\begin{equation}
A^{\diamond} = {\sf Ext}^1_\KK(A,\KK). 
\end{equation}

\begin{corollary}\label{corintro}
Assume that $A$ is a free finitely generated $\KK$-module and $\Phi$ takes free finitely generated modules to free finitely generated modules. Then there is an isomorphism of chain complexes
\begin{equation}
 {\sf Lim}_{({\sf id})} \Phi {\sf R}_A \cong (N_\bullet (\Phi^\# ( K(A,1) )))^\vee 
\end{equation} 
and for each $i$ we have a short exact sequence 
\begin{equation}\label{eq:cor_lim_dual_ses}
0 \to (L_{i-1}\Phi^\#(A^\vee,1))^\diamond \to \lim^i\Phi{\sf R}_A \to (L_i\Phi^\#(A^\vee,1))^\vee \to 0  
\end{equation}
\end{corollary}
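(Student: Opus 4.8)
The plan is to reduce the statement to the dual formula \eqref{eq:lim_dual} by choosing the tautological presentation, and then to extract the higher limits by a universal coefficient argument over the PID $\KK$. First, since $A$ is free and finitely generated, the identity ${\sf id}_A : A \epi A$ is an object of $\Pres(A)$, and for this presentation $F = A$ while $R = {\sf Ker}({\sf id}_A) = 0$. As $\Phi$ sends free finitely generated modules to free finitely generated modules, the hypotheses of the preceding Proposition are met, so \eqref{eq:lim_dual} applies and yields
\begin{equation}
{\sf Lim}_{({\sf id})} \Phi {\sf R}_A \cong \big(N_\bullet(\Phi^\#({\sf DK}_\bullet(A^\vee \to 0)))\big)^\vee .
\end{equation}
It then remains to identify the simplicial module ${\sf DK}_\bullet(A^\vee \to 0)$. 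Treating $F^\vee \to R^\vee = (A^\vee \to 0)$ as a chain complex with $A^\vee$ placed in degree $1$, this is exactly the Eilenberg--Mac Lane object $K(A^\vee,1)$; under the non-canonical identification $A \cong A^\vee$ available for free finitely generated modules over a PID this is $K(A,1)$, which gives the first isomorphism.

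For the short exact sequence I would set $D_\bullet = N_\bullet(\Phi^\#(K(A^\vee,1)))$, so that the first part reads ${\sf Lim}_{({\sf id})}\Phi {\sf R}_A \cong D_\bullet^\vee$ and hence $\lim^i \Phi {\sf R}_A \cong H^i(D_\bullet^\vee)$. The first thing to check is that $D_\bullet$ is a complex of free finitely generated $\KK$-modules: the components of $K(A^\vee,1)$ are finite direct sums of copies of $A^\vee$ by the construction of ${\sf DK}_\bullet$, and $\Phi^\#$ preserves free finitely generated modules (since $\Phi$ does and $(-)^\vee$ does), so each term $\Phi^\#(K(A^\vee,1))_n$ is free finitely generated; the normalized term $D_n$ is a direct summand of it, hence again free finitely generated over the PID $\KK$.

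With $D_\bullet$ a non-negatively graded complex of free modules, the universal coefficient theorem for cohomology over the PID $\KK$ then supplies, for each $i$, a short exact sequence
\begin{equation}
0 \to {\sf Ext}^1_\KK(H_{i-1}(D_\bullet),\KK) \to H^i(D_\bullet^\vee) \to {\sf Hom}_\KK(H_i(D_\bullet),\KK) \to 0.
\end{equation}
By the definition of the Dold--Puppe derived functors, $H_j(D_\bullet) = \pi_j(\Phi^\#(K(A^\vee,1))) = L_j\Phi^\#(A^\vee,1)$; rewriting ${\sf Hom}_\KK(-,\KK) = (-)^\vee$ and ${\sf Ext}^1_\KK(-,\KK) = (-)^\diamond$ according to the definitions of these dualities, the sequence becomes precisely \eqref{eq:cor_lim_dual_ses}.

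The main obstacle here is bookkeeping rather than a deep idea: one must track the degree shift and the dualization carrying the cochain complex $[R \to F]$ to the chain complex $[F^\vee \to R^\vee]$, so that the relation summand lands in degree $1$ and produces an Eilenberg--Mac Lane object rather than a shifted complex, and one must secure the degreewise freeness of $D_\bullet$ needed to legitimately invoke the universal coefficient theorem. A minor but genuine subtlety is the appearance of $A^\vee$ rather than $A$ in the first isomorphism: the honest computation outputs $K(A^\vee,1)$, and the displayed $K(A,1)$ is justified only up to the non-natural isomorphism $A \cong A^\vee$, which is harmless since the second part is already phrased in terms of $A^\vee$.
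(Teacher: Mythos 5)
Your proof is correct and takes essentially the same route as the paper's: apply the dual formula \eqref{eq:lim_dual} to the identity presentation (where $F=A$, $R=0$), identify ${\sf DK}_\bullet(A^\vee\to 0)$ with $K(A^\vee,1)$, and conclude via the universal coefficient theorem, identifying $\pi_j$ of $\Phi^\#(K(A^\vee,1))$ with $L_j\Phi^\#(A^\vee,1)$. Your extra care about degreewise freeness of the normalized complex and about the discrepancy between the displayed $K(A,1)$ and the computed $K(A^\vee,1)$ only makes explicit what the paper's one-line proof leaves implicit.
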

\begin{proof}
It follows from isomorphism \eqref{eq:lim_dual}, the fact that ${\sf DK}_\bullet(A^\vee\to 0)=K(A^\vee,1)$ and from the universal coefficient theorem for chain complexes.  
\end{proof}

\begin{corollary}
Assume that $A$ is a torsion finitely generated $\KK$-module and $\Phi$ takes free finitely generated modules to free finitely generated modules. Then for each $i$ there is a short exact sequence
\begin{equation}
0 \to (L_{i-1}\Phi^\#(A^\diamond ,0))^\diamond \to \lim^i\Phi{\sf R}_A \to (L_i\Phi^\#(A^\diamond,0))^\vee \to 0  
\end{equation}
\end{corollary}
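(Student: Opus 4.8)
The plan is to treat this as the torsion analogue of Corollary~\ref{corintro}, running the same argument but replacing the trivial presentation (available only in the free case) by an arbitrary finite presentation and then re-identifying the relevant dualized complex. First I would fix a finitely generated presentation $R \mono F \epi A$; since $\KK$ is a principal ideal domain and $A$ is finitely generated, both $F$ and $R$ are free and finitely generated, and since $\Phi$ (hence $\Phi^\#$) sends free finitely generated modules to free finitely generated ones, the hypotheses underlying the isomorphism \eqref{eq:lim_dual} are met. This gives
\begin{equation}
{\sf Lim}_{(p)} \Phi {\sf R}_A \cong \big(N_\bullet(\Phi^\#({\sf DK}_\bullet(F^\vee \to R^\vee)))\big)^\vee ,
\end{equation}
so everything reduces to understanding the two-term chain complex $F^\vee \xrightarrow{\varphi_p^\vee} R^\vee$, placed in homological degrees $1$ and $0$.

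The key computation is the homology of this complex. Applying ${\sf Hom}_\KK(-,\KK)$ to $0 \to R \to F \to A \to 0$ yields the exact sequence
\begin{equation}
0 \to A^\vee \to F^\vee \xrightarrow{\varphi_p^\vee} R^\vee \to A^\diamond \to {\sf Ext}^1_\KK(F,\KK).
\end{equation}
Because $A$ is torsion and $\KK$ is a domain, $A^\vee = 0$; because $F$ is free, ${\sf Ext}^1_\KK(F,\KK)=0$. Hence $\varphi_p^\vee$ is injective with cokernel $A^\diamond$, i.e.\ $F^\vee \to R^\vee$ is a length-one free resolution of $A^\diamond$ concentrated in degree $0$. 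Since ${\sf DK}_\bullet$ preserves homotopy groups and sends this complex to a degreewise-free simplicial object, ${\sf DK}_\bullet(F^\vee \to R^\vee)$ is a free simplicial resolution of $A^\diamond$, so by the definition of the Dold–Puppe derived functors
\begin{equation}
H_i\big(N_\bullet(\Phi^\#({\sf DK}_\bullet(F^\vee \to R^\vee)))\big) = \pi_i\,\Phi^\#({\sf DK}_\bullet(F^\vee \to R^\vee)) = L_i\Phi^\#(A^\diamond,0).
\end{equation}

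To finish, I would dualize and invoke the universal coefficient theorem for chain complexes over a principal ideal domain, exactly as in Corollary~\ref{corintro}. As $\Phi^\#$ preserves free finitely generated modules, the complex $N_\bullet(\Phi^\#({\sf DK}_\bullet(F^\vee \to R^\vee)))$ is levelwise free and finitely generated, so the universal coefficient sequence applied to its homology groups computed above reads
\begin{equation}
0 \to (L_{i-1}\Phi^\#(A^\diamond,0))^\diamond \to \lim^i \Phi {\sf R}_A \to (L_i\Phi^\#(A^\diamond,0))^\vee \to 0,
\end{equation}
which is the claim. The one genuinely new point compared with the free case, and the step I expect to be the crux, is the homology computation: the torsion hypothesis forces $A^\vee = 0$ and thereby shifts the nonzero homology of $F^\vee \to R^\vee$ from degree $1$ (where $A^\vee$ produced $K(A^\vee,1)$ in Corollary~\ref{corintro}) down to degree $0$, where it becomes $A^\diamond = {\sf Ext}^1_\KK(A,\KK)$. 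Everything else is a formal transcription of the previous proof.
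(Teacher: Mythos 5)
Your proposal is correct and follows essentially the same route as the paper's own proof: reduce via the isomorphism \eqref{eq:lim_dual}, observe that torsionness of $A$ forces $A^\vee=0$ so that $F^\vee\to R^\vee$ is a free resolution of $A^\diamond$ and ${\sf DK}_\bullet(F^\vee\to R^\vee)$ a free simplicial resolution of $K(A^\diamond,0)$, then conclude by the universal coefficient theorem. The only difference is that you spell out the ${\sf Hom}/{\sf Ext}$ exact sequence identifying the homology of $F^\vee\to R^\vee$, which the paper leaves implicit.
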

\begin{proof}
Since $A$ is torsion module, $A^\vee=0.$ Therefore, $F^\vee\to R^\vee$ is a free resolution of $A^\diamond.$ It follows that  ${\sf DK}_\bullet(F^\vee \to R^\vee)$ is a free simplicial resolution of $K(A^\diamond,0),$ and hence $\pi_i (\Phi^\#({\sf DK}_\bullet(F^\vee \to R^\vee))) = L_i \Phi^\#(A^\diamond,0).$ Then the assertion follows from the isomorphism \eqref{eq:lim_dual} and the universal coefficient theorem for chain complexes. 
\end{proof}

\section{Higher limits for symmetric powers}

In this section we assume that $\KK=\ZZ$ and study $\lim^i S^d{\sf R}_A,$ where $S^d$ is $d$-th symmetric power and $A$ is a free finitely generated abelian group. In this description we will use results of \cite{breen2016derived} about the description of the non-additive derived functors of the functors of divided powers. Namely we will use that for a finitely generated free abelian group $A$ we have 
\begin{equation}\label{eq:L_i_Gamma}
L_i \Gamma^d (A,1) 
= 
\begin{cases}
\Lambda^d (A), & i=d,\\
\text{a finite abelian group}, & 0<i<d,\\
0, & i=0 \text{ or } i>d.
\end{cases}
\end{equation}
(see \cite[Theorem 6.3]{breen2016derived}, and the beginning of \S 7 in \cite{breen2016derived}). 

For an abelian group $A$ we denote by $\otimes(A)$ the tensor algebra. Then the symmetric algebra ${\sf S}(A)$ is its quotient by the relations $ab=ba$ for $a,b\in A,$ and the exterior algebra $\Lambda(A)$ is its quotient by the relations $a^2=0$ for $a\in A.$ We will also consider the anti-symmetric algebra $\tilde\otimes(A)$ which is a quotient of $\otimes(A)$ by the relations $ab=-ba$ for $a,b\in A.$ Since the relation $ab=-ba$ is satisfied in $\Lambda(A),$ we have a natural epimorphism $\tilde\otimes(A)\epi \Lambda(A).$ Consider the kernel
\begin{equation}
ASK(A):={\sf Ker}(\tilde\otimes(A)\epi \Lambda(A)).
\end{equation}
The ideal $ASK(A)$ is generated by elements $a^2$ for $a\in A.$ The graded abelian group $ASK(A)$ is a $2$-torsion group, because $a^2=-a^2$ in $\tilde\otimes(A).$ Since $\tilde\otimes(A)$ and $\Lambda(A)$ are naturally graded we can consider the homogeneous components $\Lambda^d(A), \tilde\otimes^d(A), ASK^d(A).$

Note that for a finite abelian group $T$ we have 
\begin{equation}
T^\diamond\cong {\sf Hom}(T,\mathbb{Q}/\ZZ)
\end{equation}
and the functor $(-)^\diamond$ induces a self-duality on the category of finite abelian groups, as well as the functor $(-)^\vee$ is a self-duality on the category of free finitely generated abelian groups. 

\begin{theorem}\label{th:limits_S^d} For a free finitely generated abelian group $A$ there is a natural isomorphism  
\begin{equation}\label{eq:iso_lim^i}
\lim^i S^d {\sf R}_A  \cong  (L_{i-1} \Gamma^d (A^\vee,1))^\diamond, \hspace{1cm} \text{for } i< d,
\end{equation}
a natural short exact sequence 
\begin{equation}\label{eq:ses_lim^d}
0 \longrightarrow (L_{d-1} \Gamma^d (A^\vee,1))^\diamond \longrightarrow \lim^d S^d {\sf R}_A \longrightarrow \Lambda^d(A) \longrightarrow 0
\end{equation}
and $\lim^i S^d{\sf R}_A=0$ for $i>d.$ Moreover, for a free abelian group $A$ (not necessarily finitely generated) there are natural isomorphisms 
\begin{equation}\label{eq:lim^dS^d=AS}
\lim^d S^d{\sf R}_A \cong \tilde\otimes^d(A),
\end{equation}
for $d\geq 1,$ and
$
\lim^1 S^d{\sf R}_A =0
$ for $d\geq 2.$
\end{theorem}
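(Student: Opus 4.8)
The plan is to prove the finitely generated statements \eqref{eq:iso_lim^i}, \eqref{eq:ses_lim^d} and the vanishing for $i>d$ by Kuhn duality, and the statements for arbitrary free $A$ by the explicit cross-effect complex. For the first group I would start from the classical identification $(S^d)^\#\cong \Gamma^d$, coming from the perfect pairing between $S^d(A^\vee)$ and $\Gamma^d(A)$ for free finitely generated $A$. Since $S^d$ takes free finitely generated modules to free finitely generated modules, Corollary \ref{corintro} applies and specializes the sequence \eqref{eq:cor_lim_dual_ses} to
\begin{equation*}
0 \to (L_{i-1}\Gamma^d(A^\vee,1))^\diamond \to \lim^i S^d{\sf R}_A \to (L_i\Gamma^d(A^\vee,1))^\vee \to 0 .
\end{equation*}
Feeding in the Breen--Mikhailov values \eqref{eq:L_i_Gamma} finishes this part: for $i<d$ the group $L_i\Gamma^d(A^\vee,1)$ is finite (or zero), so its $\vee$-dual vanishes and the sequence collapses to \eqref{eq:iso_lim^i}; for $i=d$ we get $L_d\Gamma^d(A^\vee,1)=\Lambda^d(A^\vee)$, whose $\vee$-dual is $\Lambda^d(A)$, which is \eqref{eq:ses_lim^d}; for $i>d$ the right term vanishes, and the left term vanishes as well, since $L_{i-1}\Gamma^d=0$ for $i>d+1$ while for $i=d+1$ the group $L_d\Gamma^d(A^\vee,1)=\Lambda^d(A^\vee)$ is free and hence has trivial $\diamond={\sf Ext}^1_\ZZ(-,\ZZ)$.

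For the statements valid for an arbitrary free $A$ I would abandon duality and compute directly. By Corollary \ref{cor:Lim(id)} together with Theorem \ref{th:Fedor} we have $\lim^i S^d{\sf R}_A\cong H^i(C_{S^d}(A)[-1])$, where $C_{S^d}(A)^n=S^d(A|\dots|A)$ has $n+1$ arguments and, by the cross-effect decomposition, equals $\bigoplus S^{\alpha_0}(A)\otimes\dots\otimes S^{\alpha_n}(A)$ over compositions $\alpha$ of $d$ with all parts $\geq 1$; in particular the complex lives in degrees $0,\dots,d-1$, with top term $S^d(A|\dots|A)=A^{\otimes d}$. Each coface $h^i$ splits the $i$-th slot by the comultiplication of the symmetric power sitting there. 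The bottom cohomology gives $\lim^1 S^d{\sf R}_A=\Ker(h^0\colon S^d(A)\to S^d(A|A))$; the component of $h^0$ landing in $A\otimes S^{d-1}(A)$ is the polarization, which is injective for free $A$ and $d\geq 2$ (it becomes injective after $\otimes\,\mathbb{Q}$, composing with multiplication to $\cdot\,d$, and $S^d(A)$ is torsion-free), so this kernel is zero.

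The isomorphism \eqref{eq:lim^dS^d=AS} is the top cohomology $\lim^d S^d{\sf R}_A={\sf Coker}(\partial^{d-2}\colon C_{S^d}(A)^{d-2}\to A^{\otimes d})$. Here $C_{S^d}(A)^{d-2}$ is the sum of $d-1$ copies, the $k$-th one carrying $S^2(A)$ in slot $k$ and $A$ elsewhere. The crux is that on this summand every coface except the one splitting slot $k$ is zero, because splitting a degree-one slot is the comultiplication $S^1(A)\to A\otimes A$ into two positive degrees and vanishes on the cross-effect; the surviving coface is the comultiplication $S^2(A)\to A\otimes A$, $xy\mapsto x\otimes y+y\otimes x$, inserted in the adjacent positions $k,k+1$. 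Thus the image of $\partial^{d-2}$ is precisely the subgroup generated by $u+\tau_k u$ for decomposable $u$ and adjacent transpositions $\tau_k$, and the cokernel is $A^{\otimes d}$ modulo the relations $v_1\otimes\dots\otimes v_d=-v_1\otimes\dots\otimes v_{k+1}\otimes v_k\otimes\dots\otimes v_d$, i.e. $\tilde\otimes^d(A)$, naturally in $A$ (the case $d=1$ reading $A=\tilde\otimes^1(A)$).

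The main obstacle is exactly this last computation: one must pin down the top differential of $C_{S^d}(A)$ carefully enough --- using the index conventions of Lemma \ref{lemma:eq_nabla} and the cross-effect formulas of Lemma \ref{lemma:cross_pr} --- to be sure that on each summand only the single $S^2$-slot contributes and that its contribution is the full symmetric comultiplication into adjacent factors. This is what upgrades the answer from $\Lambda^d(A)$, up to a torsion extension, to the honest anti-symmetric power $\tilde\otimes^d(A)$. As a by-product, comparison with \eqref{eq:ses_lim^d} over finitely generated $A$ identifies the torsion subgroup $(L_{d-1}\Gamma^d(A^\vee,1))^\diamond$ with $ASK^d(A)$.
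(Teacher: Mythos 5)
Your proposal is correct, and in its two main parts it follows the paper's own route: the finitely generated statements are obtained exactly as in the paper by feeding the values \eqref{eq:L_i_Gamma} into the Kuhn-duality sequence \eqref{eq:cor_lim_dual_ses} via $(S^d)^\#\cong\Gamma^d$, and the identification \eqref{eq:lim^dS^d=AS} is the same top-degree cokernel computation in the cross-effect complex $C_{S^d}(A)$, including the key observation that on the summand $A^{\otimes k-1}\otimes S^2(A)\otimes A^{\otimes d-k-1}$ only the coface splitting the $S^2$-slot survives and that it is the symmetric comultiplication $xy\mapsto x\otimes y+y\otimes x$. The one place where you genuinely diverge is the vanishing of $\lim^1 S^d{\sf R}_A$ for an arbitrary (not necessarily finitely generated) free $A$: the paper first gets it for finitely generated $A$ from \eqref{eq:iso_lim^i} and then passes to the general case by writing $A$ as a filtered colimit of finitely generated free subgroups, using that $C_{S^d}(-)$ and its cohomology commute with filtered colimits; you instead argue directly that
\begin{equation*}
\lim^1 S^d{\sf R}_A \;=\; \Ker\bigl(h^0\colon S^d(A)\to S^d(A|A)\bigr) \;=\;0,
\end{equation*}
because the component of $h^0$ landing in $A\otimes S^{d-1}(A)$ is the polarization, whose composite with the multiplication map $A\otimes S^{d-1}(A)\to S^d(A)$ is multiplication by $d$, which is injective since $S^d(A)$ is free (hence torsion-free) for free $A$. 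This argument is valid and arguably cleaner: it is uniform in $A$ and removes the filtered-colimit step for this part entirely (the paper still needs that step for \eqref{eq:lim^dS^d=AS} only insofar as the cokernel computation is already basis-free, so nothing is lost). A minor further difference: you deduce the vanishing for $i>d$ from the duality sequence, using that $L_{i-1}\Gamma^d(A^\vee,1)$ is zero for $i>d+1$ and is the free group $\Lambda^d(A^\vee)$ (with trivial ${\sf Ext}^1_\ZZ(-,\ZZ)$) for $i=d+1$, whereas the paper has it for free from the polynomiality of $S^d$ and the corollary to Theorem \ref{th:cross_effect_comlex}; both are correct.
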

\begin{proof}
First we note that for a free finitely generated abelian group $A$ we have $\Gamma^d(A)=(S^d)^\#(A)$ and $\Lambda^d(A)=(\Lambda^d)^\#(A)$ (see \cite[\S 2.3]{breen2016derived}). 
The equation \eqref{eq:L_i_Gamma} implies that 
\begin{equation}
L_i \Gamma^d (A^\vee,1)^\vee 
= 
\begin{cases}
(\Lambda^d)^\# (A) = \Lambda^d(A), & i=d,\\
0, & i\neq d.
\end{cases}
\end{equation}
Then the isomorphism \ref{eq:iso_lim^i} and the short exact sequence \ref{eq:ses_lim^d} follow from the short exact sequence  \eqref{eq:cor_lim_dual_ses}.  

Let us prove the isomorphism \eqref{eq:lim^dS^d=AS}. Here we will use Corollary \ref{cor:Lim(id)}. Let's compute cross effects of $S^d$ and the maps $h_i$ for them. The symmetric algebra is an exponential functor $S(A\oplus B)=S(A)\otimes S(B),$ with the isomorphism defined by 
\begin{equation}
(a_1+b_1)\cdot {\dots} \cdot (a_n+b_n)\mapsto \sum_{\{i_1<\dots<i_k\} \sqcup \{j_1<\dots<j_l\}=\{1,\dots,n\}} (a_{i_1}\dots a_{i_k}) \otimes  (b_{j_1}\dots b_{j_l}).
\end{equation}
If we take $A=B$ and compose it with the diagonal map $\Delta:A\to A\oplus A,$ we obtain the map $S(A)\to S(A)\otimes S(A)$ defined by 
\begin{equation}\label{eq:diagonal}
a_1\cdot {\dots} \cdot a_n\mapsto \sum_{\{i_1<\dots<i_k\} \sqcup \{j_1<\dots<j_l\}=\{1,\dots,n\}} (a_{i_1}\dots a_{i_k}) \otimes  (a_{j_1}\dots a_{j_l}).
\end{equation}

We obtain $S(A_1\oplus \dots \oplus A_n)=S(A_1)\otimes \dots \otimes S(A_n)$ for any sequence of abelian groups $A_1,\dots, A_n.$ Therefore
\begin{equation}
S^d(A_1\oplus \dots \oplus A_n) = \bigoplus_{d_1+\dots+d_n=d, \ d\geq 0} S^{d_1}(A_1) \otimes \dots \otimes S^{d_n}(A_n).    
\end{equation}
It follows that 
\begin{equation}
S^d(A_1 | \dots | A_n) = \bigoplus_{d_1+\dots+d_n=d,\ d_i\geq 1} S^{d_1}(A_1) \otimes \dots \otimes S^{d_n}(A_n).
\end{equation}
In particular for $n=d-1,d$ and $A=A_1=\dots=A_d$ we obtain 
\begin{equation}
\begin{split}
S^d_{[d]}(A) &= A^{\otimes d}.   \\
S^d_{[d-1]}(A) & = \bigoplus_{i=1}^{d-1} A^{\otimes i-1}\otimes S^2(A) \otimes  A^{\otimes d-i-1}. 
\end{split}
\end{equation}
Now we need to compute $h_i:S^d_{[d-1]}(A)\to S^d_{[d]}(A).$ Looking on the formula \ref{eq:diagonal} we see that $h_i$ is trivial on all the summands $A^{\otimes j-1}\otimes S^2(A) \otimes A^{\otimes d-j-1}$ for $j\neq i$ and the restriction of of $h_i$ to $A^{\otimes i-1}\otimes S^2(A) \otimes A^{\otimes d-i-1}$ is given by the formula
\begin{equation}
\begin{split}
&h_i(a_1\otimes \dots \otimes   a_i a_{i+1} \otimes \dots \otimes a_d) = \\
&=a_1\otimes \dots  \otimes   a_i \otimes  a_{i+1} \otimes \dots \otimes a_d + a_1\otimes \dots  \otimes   a_{i+1} \otimes  a_{i} \otimes \dots \otimes a_d 
\end{split}
\end{equation}
It follows that the cokernel of the map $S^d_{[d-1]}(A) \to S^d_{[d]}(A)$ is the quotient of the tensor power  $A^{\otimes d}$ by the relations 
\begin{equation}
a_1\otimes \dots  \otimes   a_i \otimes  a_{i+1} \otimes \dots \otimes a_d = - a_1\otimes \dots  \otimes   a_{i+1} \otimes  a_{i} \otimes \dots \otimes a_d,
\end{equation}
which is equal to $\tilde\otimes^d(A).$ It follows that $\lim^d S^d{\sf R}_A \cong  H^{d-1}(C_{S^d}(A))\cong \tilde\otimes^d(A).$

For finitely generated free abelian groups $A$ the isomorphism \eqref{eq:iso_lim^i} implies 
$\lim^1 S^d{\sf R}_A \cong  L_0\Gamma^d(A^\vee,1)^\diamond =0.$ For all free abelian groups the isomorphism $\lim^1 S^d{\sf R}_A  =0$ follows from the fact that all the constructions $S^d(A), S^d_{[d]}(A), C_{S^d}(A), H^*(C_{S^d}(A))$ commute with filtered colimits of abelian groups, and that any free abelian group can be presented as a filtered colimits of its finitely generated subgroups, which are also free. 
\end{proof}

\begin{corollary}
For a free finitely generated abelian group $A$ there is a natural isomorphism 
\begin{equation}
L_{d-1}\Gamma^d(A,1) \cong  (ASK^d(A^\vee))^\diamond.
\end{equation}
\end{corollary}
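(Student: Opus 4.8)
The plan is to read off the desired isomorphism by comparing the two descriptions of $\lim^d S^d{\sf R}_A$ that Theorem \ref{th:limits_S^d} provides. Combining the natural short exact sequence \eqref{eq:ses_lim^d} with the natural isomorphism \eqref{eq:lim^dS^d=AS}, I obtain, for every finitely generated free abelian group $A$, a natural short exact sequence
\begin{equation*}
0 \longrightarrow (L_{d-1}\Gamma^d(A^\vee,1))^\diamond \longrightarrow \tilde\otimes^d(A) \longrightarrow \Lambda^d(A) \longrightarrow 0 .
\end{equation*}
On the other hand, by the very definition of $ASK$ there is the tautological natural short exact sequence $0 \to ASK^d(A) \to \tilde\otimes^d(A) \to \Lambda^d(A) \to 0$. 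The first step is thus to identify the two kernel subgroups sitting inside $\tilde\otimes^d(A)$, which will give $(L_{d-1}\Gamma^d(A^\vee,1))^\diamond \cong ASK^d(A)$.

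The key observation is that I have no a priori control over the quotient map $\tilde\otimes^d(A)\epi\Lambda^d(A)$ coming from the derived sequence, but I do not need it: in any short exact sequence $0 \to K \to M \to N \to 0$ with $K$ a torsion group and $N$ torsion-free, one has $K = {\sf tors}(M)$ for formal reasons, since every torsion element of $M$ maps to $0$ in $N$ and hence lies in $K$, while $K$ itself is torsion. I would apply this twice. Here $\Lambda^d(A)$ is free, hence torsion-free; $ASK^d(A)$ is $2$-torsion by construction; and $(L_{d-1}\Gamma^d(A^\vee,1))^\diamond$ is finite because $L_{d-1}\Gamma^d(A^\vee,1)$ is finite for $d\ge 2$ by \eqref{eq:L_i_Gamma} (and zero for $d=1$). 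Therefore both kernels equal ${\sf tors}(\tilde\otimes^d(A))$, yielding the natural isomorphism $(L_{d-1}\Gamma^d(A^\vee,1))^\diamond \cong ASK^d(A)$; naturality is inherited from the naturality of \eqref{eq:ses_lim^d} and \eqref{eq:lim^dS^d=AS} together with the fact that the torsion subgroup is a natural subfunctor.

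It then remains to rewrite this in the stated form. I would substitute $A^\vee$ for $A$, which is again finitely generated free, and use the natural isomorphism $(A^\vee)^\vee\cong A$ to get $(L_{d-1}\Gamma^d(A,1))^\diamond \cong ASK^d(A^\vee)$. Applying the duality $(-)^\diamond$ and invoking that $(-)^\diamond$ is a self-duality of the category of finite abelian groups, so that $(L_{d-1}\Gamma^d(A,1))^{\diamond\diamond}\cong L_{d-1}\Gamma^d(A,1)$ (the group $L_{d-1}\Gamma^d(A,1)$ being finite), I arrive at $L_{d-1}\Gamma^d(A,1) \cong (ASK^d(A^\vee))^\diamond$.

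I expect the only delicate point to be the bookkeeping of naturality through the contravariant operations $(-)^\vee$ and $(-)^\diamond$: one must check that both $L_{d-1}\Gamma^d(A,1)$ and $(ASK^d(A^\vee))^\diamond$ are covariant functors of $A$, and that precomposing with $\vee$ turns the covariant natural isomorphism of the second paragraph into a natural isomorphism of contravariant functors, which the final application of $\diamond$ converts back into a covariant one. The genuine mathematical content, by contrast, is the soft torsion-subgroup comparison, which lets me avoid computing the quotient map of the derived short exact sequence altogether.
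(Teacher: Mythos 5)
Your proposal is correct and is essentially the paper's own argument: both identify $(L_{d-1}\Gamma^d(A^\vee,1))^\diamond$ and $ASK^d(A)$ with the torsion subgroup of $\lim^d S^d{\sf R}_A \cong \tilde\otimes^d(A)$ (using that $\Lambda^d(A)$ is free and the kernels are torsion), and then apply the self-dualities $(-)^\vee$ and $(-)^\diamond$ to rewrite the result in the stated form. Your write-up merely makes explicit the two steps the paper leaves implicit — the ``torsion kernel plus torsion-free quotient implies kernel equals torsion subgroup'' lemma, and the final bookkeeping of substituting $A^\vee$ and double-dualizing.
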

\begin{proof}
The short exact sequence \ref{eq:ses_lim^d} implies that the torsion subgroup of $\lim^dS^d{\sf R}_A$ is naturally isomorphic to $(L_{d-1}\Gamma^d(A^\vee,1))^\diamond.$ The isomorphism \ref{eq:lim^dS^d=AS} implies that the torsion group of $\lim^dS^d{\sf R}_A$ is naturally isomorphic to $ASK^d(A).$ Therefore $(L_{d-1}\Gamma^d(A^\vee,1))^\diamond \cong ASK^d(A).$ Using that $(-)^\diamond$ is a self-duality for finite abelian groups and $(-)^\vee$ is a self-duality for free finitely generated abelian groups, we obtain the required isomorphism. 
\end{proof}

\begin{corollary}\label{cor:non-natural_iso}
For a free finitely generated abelian group $A$ there are \textbf{non-natural} isomorphisms of abelian groups
\begin{equation}
\lim^i S^d{\sf R}_A 
\cong
\begin{cases}
L_{i-1}\Gamma^d(A,1), & i<d, \\
L_{d-1}\Gamma^d(A,1) \oplus L_d \Gamma^d(A,1) , & i=d,\\
0,& i>d.
\end{cases}
\end{equation}
\end{corollary}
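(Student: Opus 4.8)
The plan is to transport the \emph{natural} statements of Theorem \ref{th:limits_S^d} across the non-canonical self-dualities of finite abelian groups and of free finitely generated abelian groups, treating the three ranges $i>d$, $i<d$, and $i=d$ in turn. Throughout I will use the basic observation that every finite abelian group $T$ is (non-canonically) isomorphic to its dual $T^\diamond \cong {\sf Hom}(T,\mathbb{Q}/\ZZ)$, and that $A^\vee \cong A$ since both are free of the same finite rank.

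The case $i>d$ is immediate: $\lim^i S^d{\sf R}_A=0$ by Theorem \ref{th:limits_S^d}, while $L_i\Gamma^d(A,1)=0$ by \eqref{eq:L_i_Gamma}. For $i<d$ I would begin from the natural isomorphism $\lim^i S^d{\sf R}_A \cong (L_{i-1}\Gamma^d(A^\vee,1))^\diamond$ of \eqref{eq:iso_lim^i}. Since $i-1<d$, equation \eqref{eq:L_i_Gamma} shows that $L_{i-1}\Gamma^d(A^\vee,1)$ is a finite abelian group (zero when $i-1\le 0$, and finite when $0<i-1<d$). Hence the non-canonical self-duality gives $(L_{i-1}\Gamma^d(A^\vee,1))^\diamond \cong L_{i-1}\Gamma^d(A^\vee,1)$, and the isomorphism $A^\vee\cong A$ induces $L_{i-1}\Gamma^d(A^\vee,1)\cong L_{i-1}\Gamma^d(A,1)$. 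Composing these yields the desired $\lim^i S^d{\sf R}_A\cong L_{i-1}\Gamma^d(A,1)$.

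For the main case $i=d$ I would use the natural short exact sequence \eqref{eq:ses_lim^d}. Its right-hand term $\Lambda^d(A)$ is free abelian (for $A$ free finitely generated), hence projective, so the sequence splits and gives $\lim^d S^d{\sf R}_A\cong (L_{d-1}\Gamma^d(A^\vee,1))^\diamond \oplus \Lambda^d(A)$. The first summand is finite (since $0<d-1<d$, and it is zero when $d=1$), so as above it is non-canonically isomorphic to $L_{d-1}\Gamma^d(A,1)$; the second summand equals $L_d\Gamma^d(A,1)$ by \eqref{eq:L_i_Gamma}. This produces the decomposition $\lim^d S^d{\sf R}_A\cong L_{d-1}\Gamma^d(A,1)\oplus L_d\Gamma^d(A,1)$.

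There is no genuine computational obstacle here, since all the analytic work is already packaged in Theorem \ref{th:limits_S^d}; the only point worth underlining is conceptual. Every isomorphism above is built from the self-duality $T\cong T^\diamond$ of finite groups, from $A\cong A^\vee$, and (in the case $i=d$) from a choice of splitting of \eqref{eq:ses_lim^d}. None of these can be chosen naturally --- Pontryagin duality is natural but \emph{contravariant}, yielding only a natural isomorphism $T\cong T^{\diamond\diamond}$ --- which is exactly why the statement records the isomorphisms as non-natural, in contrast with the naturally exact sequence \eqref{eq:ses_lim^d} from which they are extracted.
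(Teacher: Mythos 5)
Your proof is correct and takes essentially the same route as the paper's: for $i<d$ it combines the natural isomorphism \eqref{eq:iso_lim^i} with the non-natural self-dualities $T\cong T^\diamond$ (finite groups) and $A\cong A^\vee$, and for $i=d$ it splits the sequence \eqref{eq:ses_lim^d} using that $\Lambda^d(A)\cong L_d\Gamma^d(A,1)$ is free abelian, with the $i>d$ case read off from Theorem \ref{th:limits_S^d} and \eqref{eq:L_i_Gamma}. The only difference is presentational: your write-up is slightly more explicit about the edge cases and about why the isomorphisms cannot be made natural.
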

\begin{proof}
For any finitely generated free abelian group $A$ we have an non-natural isomorphism $A\cong A^\vee.$ For any finite group $T$ we have a non-natural isomorphism $T\cong {\sf Hom}(T,\mathbb{Q}/\ZZ)\cong T^\diamond.$  Therefore, using \eqref{eq:L_i_Gamma} for $i\ne d$ we have a non-natural isomorphism $L_i\Gamma^d(A^\vee,1)^\diamond \cong L_i\Gamma^d(A,1).$ Using \eqref{eq:iso_lim^i} we obtain the required isomorphism for $i<d.$
Since $L_d\Gamma^d(A,1)\cong \Lambda^d(A)$ is free abelian, we obtain that the short exact sequence \eqref{eq:ses_lim^d} non-naturally splits. The isomorphism for $i=d$ follows. 
\end{proof}

\begin{example}
Theorem \ref{th:limits_S^d} implies that for a free abelian group $A$ we have $\lim^1 S^2{\sf R}_{A}=0$ and $\lim^2S^2{\sf R}_A \cong \tilde\otimes^2(A).$ For $S^3$ we similarly have $\lim^1 S^3{\sf R}_A =0$ and $\lim^3 S^3{\sf R}_A = \tilde\otimes^3(A).$ We also claim that 
\begin{equation}
\lim^2 S^3{\sf R}_A \cong  A \otimes \ZZ/3.
\end{equation}
For a finitely generated free abelian group $A$ it follows from the fact that $L_1\Gamma^3(A,1)\cong A \otimes \ZZ/3$ (see \cite[(8-20)]{breen2016derived}) and that $(A^\vee \otimes \ZZ/3)^\diamond \cong A\otimes \ZZ/3.$ In order to prove it for arbitrary free abelian group $A$ we need to use that all the constructions commute with filtered colimits. A free abelian group $A$ can be presented as a filtered colimit of its finitely generated subgroups $A=\underset{B\subseteq_{\sf f.g.} A}{\sf colim} B,$ and using the fact that all the constructions that we use here commute with filtered colimits, we obtain $\lim^2 S^3{\sf R}_A \cong H^1( C_{S^3}(A)) \cong$ $  {\sf colim}_B H^1( C_{S^3}(B) )$ $\cong {\sf colim}_B B\otimes \ZZ/3 \cong A\otimes \ZZ/3.$
\end{example}

\begin{proposition}
For $n\geq 4$ we have an isomorphism of abelian groups
\begin{equation}
H_n( K(\ZZ,3),\ZZ) \cong \bigoplus_{d\geq 2} \lim^{n-2d+1} S^d{\sf R}_\ZZ
\end{equation}
{\rm (}here we assume that $\lim^i=0$ for $i<0${\rm )}.
\end{proposition}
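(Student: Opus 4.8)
The plan is to reduce both sides of the claimed isomorphism to the same direct sum of Dold--Puppe derived functors $L_\ast\Gamma^d(\ZZ,1)$: the right-hand side via the main computation of this paper, and the left-hand side via the functorial description of $H_\ast(K(\ZZ,3),\ZZ)$ from \cite{breen2016derived}.

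First I would rewrite the right-hand side using Corollary \ref{cor:non-natural_iso}. For the group $A=\ZZ$ and every $d\geq 2$ the top derived functor vanishes, $L_d\Gamma^d(\ZZ,1)=\Lambda^d(\ZZ)=0$ by \eqref{eq:L_i_Gamma}, so the cases $i<d$ and $i=d$ of that corollary merge into the single uniform isomorphism
\begin{equation}
\lim^i S^d{\sf R}_\ZZ \cong L_{i-1}\Gamma^d(\ZZ,1),
\end{equation}
valid for every $i\geq 0$ under the conventions $L_{<0}=0$ (for $i>d$, where $\lim^i S^d{\sf R}_\ZZ=0$, one has $L_{i-1}\Gamma^d(\ZZ,1)=0$ as well, so both sides vanish). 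Substituting $i=n-2d+1$ converts the right-hand side of the proposition into
\begin{equation}
\bigoplus_{d\geq 2}\lim^{n-2d+1}S^d{\sf R}_\ZZ \;\cong\; \bigoplus_{d\geq 2} L_{n-2d}\Gamma^d(\ZZ,1).
\end{equation}

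Next I would invoke the weight decomposition of the integral homology of $K(\ZZ,3)$. For a free finitely generated abelian group $A$ the homology $H_\ast(K(A,3),\ZZ)$ decomposes according to polynomial degree $d$, and the content of \cite{breen2016derived} is that the weight-$d$ summand in homological degree $n$ is the suspended derived functor $L_{n-2d}\Gamma^d(A,1)$, the shift $2d$ reflecting the two-fold suspension relating level $1$ to level $3$. Specializing to $A=\ZZ$ and using $\tilde H_n=H_n$ for $n\geq 1$ gives, for $n\geq 4$,
\begin{equation}
H_n(K(\ZZ,3),\ZZ)\;\cong\;\bigoplus_{d\geq 1} L_{n-2d}\Gamma^d(\ZZ,1).
\end{equation}

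Finally I would reconcile the ranges of summation. The $d=1$ term is $L_{n-2}\Gamma^1(\ZZ,1)=L_{n-2}(\mathrm{id})(\ZZ,1)$, which equals $\ZZ$ for $n=3$ and vanishes otherwise; hence for $n\geq 4$ the sums over $d\geq 1$ and over $d\geq 2$ agree, and the two direct sums coincide. This also explains the hypothesis $n\geq 4$: at $n=3$ the missing $d=1$ contribution is precisely the fundamental class $\ZZ=H_3$. As a consistency check, the weight-$2$ contribution sits in degree $n=5$ and equals $\lim^2 S^2{\sf R}_\ZZ\cong\tilde\otimes^2(\ZZ)\cong\ZZ/2$ by Theorem \ref{th:limits_S^d}, matching $H_5(K(\ZZ,3),\ZZ)\cong\ZZ/2$, while weight $3$ feeds $\ZZ/3$ into $H_7$ and $\ZZ/2$ into $H_8$. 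The only substantive step is the middle one: establishing from \cite{breen2016derived} that the weight-$d$ part of $H_\ast(K(\ZZ,3),\ZZ)$ is genuinely a direct summand isomorphic to $L_\ast\Gamma^d(\ZZ,1)$ with exactly the shift $2d$, with no residual spectral-sequence differentials or extension problems. Once that clean description is granted, everything else is the indexing bookkeeping above.
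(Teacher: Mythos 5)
Your proposal is correct and follows essentially the same route as the paper: both combine the decomposition $H_n(K(A,3),\ZZ)\cong\bigoplus_d L_{n-2d}\Gamma^d(A,1)$ from \cite{breen2016derived} with Corollary \ref{cor:non-natural_iso} and the vanishing $L_d\Gamma^d(\ZZ,1)=\Lambda^d(\ZZ)=0$ for $d\geq 2$. The only cosmetic difference is bookkeeping: the paper discards small $d$ via the constraint $d\geq n/3$ coming from $L_i\Gamma^d=0$ for $i>d$, while you discard the $d=1$ term directly by noting $L_{n-2}\Gamma^1(\ZZ,1)=0$ for $n\neq 3$ --- these are equivalent observations.
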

\begin{proof}
In \cite[(B-3)]{breen2016derived}  (see also \cite[Satz 4.16]{dold1961homologie}) in is proved that for a free finitely generated abelian group $A$ and any $n\geq 0$ there is an isomorphism 
\begin{equation}
H_n(K(A,3),\ZZ) \cong \bigoplus_{d} L_{n-2d} \Gamma^d(A,1).
\end{equation}
Since $L_i\Gamma^d(A,1)=0$ for $i>d,$ we can assume that the summation is taken over all $d$ such that $n-2d\leq d.$ Equivalently we can rewrite this as $d\geq n/3.$  In particular, if $n\geq 4,$ we can assume that the summation is taken over indexes $d\geq 2.$ Since $L_d\Gamma^d(A,1)=\Lambda^d(A),$ we see that for $A=\ZZ$ and $d\geq 2$ we have $L_d\Gamma^d(\ZZ,1)=0.$ Therefore by Corollary \ref{cor:non-natural_iso} for $d\geq 2$ we have an isomorphism 
\begin{equation}
\lim^i S^d{\sf R}_\ZZ \cong L_{i-1}\Gamma^d(\ZZ,1).
\end{equation}
The assertion follows. 
\end{proof}

\printbibliography

\end{document}